\documentclass{article}
\usepackage{fullpage}
\usepackage[usenames]{color}
\usepackage{bm,bbm,amsmath,amssymb,amsthm,graphicx}
\usepackage[caption=false,font=normalsize,labelfont=sf,textfont=sf]{subfig}
\usepackage[title]{appendix}
\usepackage{algorithm,pifont}
\usepackage{algpseudocode}
\usepackage{todonotes}
\usepackage{hyperref}
\usepackage{stmaryrd}
\usetikzlibrary{calc,patterns,decorations.pathmorphing,decorations.markings}

\allowdisplaybreaks


\providecommand{\keywords}[1]{\textbf{\textit{Index terms---}} #1}

\numberwithin{equation}{section}

\newcommand{\MAT}{\left[ \begin{array}}  
\newcommand{\mat}{\end{array} \right]}
\newtheorem{Definition}{Definition}[section]
\newtheorem{Corollary}[Definition]{Corollary}
\newtheorem{Lemma}[Definition]{Lemma}
\newtheorem{Theorem}[Definition]{Theorem}

\newtheorem{remark}[Definition]{Remark}

\def \R {\mathbbm{R}}

\def \w {\bm{w}}

\def \x {{\bf x}}

\def \y {{\bf y}}

\begin{document}

\title{On Outer Bi-Lipschitz Extensions of Linear Johnson-Lindenstrauss Embeddings of Subsets of $\mathbbm{R}^N$}

\author{Rafael Chiclana\thanks{Department of Mathematics, Michigan State University, East Lansing, MI 48824.  {\bf E-mails:}  chiclan1@msu.edu, roachma3@msu.edu}, Mark A. Iwen\thanks{Department of Mathematics, and Department of Computational Mathematics, Science and Engineering (CMSE), Michigan State University, East Lansing, MI 48824.  {\bf E-mail:} iwenmark@msu.edu}, and Mark Philip Roach\footnotemark[1]}
%

\maketitle
\begin{abstract}
The celebrated Johnson-Lindenstrauss lemma states that for all $\varepsilon \in (0,1)$ and finite sets $X \subseteq \mathbbm{R}^N$ with $n>1$ elements, there exists a matrix $\Phi \in \mathbbm{R}^{m \times N}$ with $m=\mathcal{O}(\varepsilon^{-2}\log n)$ such that
\[ (1 - \varepsilon) \|\x-\y\|_2 \leq \|\Phi\x-\Phi\y\|_2 \leq (1+\varepsilon)\|\x-\y\|_2 \quad \forall\, \x,\y \in X.\]
Herein we consider so-called ``terminal embedding" results which have recently been introduced in the computer science literature as stronger extensions of the Johnson–Lindenstrauss lemma for finite sets.  After a short survey of this relatively recent line of work, we extend the theory of terminal embeddings to hold for arbitrary (e.g., infinite) subsets $X \subseteq \mathbbm{R}^N$, and then specialize our generalized results to the case where $X$ is a low-dimensional compact submanifold of $\mathbbm{R}^N$.  In particular, we prove the following generalization of the Johnson–Lindenstrauss lemma:  For all $\varepsilon \in (0,1)$ and $X\subseteq\mathbbm{R}^N$, there exists a terminal embedding $f: \mathbbm{R}^N \longrightarrow \mathbbm{R}^{m}$ such that
$$(1 - \varepsilon) \| {\bf x} - {\bf y} \|_2 \leq \left\| f({\bf x}) - f({\bf y}) \right\|_2 \leq (1 + \varepsilon) \| {\bf x} - {\bf y} \|_2 \quad \forall \, {\bf x} \in X ~{\rm and}~ \forall \, {\bf y} \in \mathbbm{R}^N.$$
Crucially, we show that the dimension $m$ of the range of $f$ above is optimal up to multiplicative constants, satisfying $m=\mathcal{O}(\varepsilon^{-2} \omega^2(S_X))$, where $\omega(S_X)$ is the Gaussian width of the set of unit secants of $X$, $S_X=\overline{\{(\x-\y)/\|\x-\y\|_2 \colon \x\neq \y \in X\}}$.  Furthermore, our proofs are constructive and yield algorithms for computing a general class of terminal embeddings $f$, an instance of which is demonstrated herein to allow for more accurate compressive nearest neighbor classification than standard linear Johnson-Lindenstrauss embeddings do in practice.
\end{abstract}

\keywords{Outer Bi-Lipschiptz Extensions, Manifold Embeddings, Johnson-Lindenstrauss Lemma, Compressive Classification, Terminal Embeddings. \\

\textit{ \textbf{MSC classes}}---	51F30, 65D18, 68R12}

\section{Introduction}

The classical Kirszbraun theorem \cite{kirszbraun1934uber} ensures that a Lipschitz continuous function $f: X \rightarrow \mathbbm{R}^m$, where $X$ is a subset of $\mathbbm{R}^N$, can always be extended to a function $f': \mathbbm{R}^N \rightarrow \mathbbm{R}^m$ with the same Lipschitz constant as $f$. More recently, similar results have been proven for bi-Lipschitz functions in the theoretical computer science literature. Recall that the \textbf{distortion} of a bi-Lipschitz map $f \colon X \longrightarrow \mathbbm{R}^m$ is the minimum $D\geq 0$ such that for some $\lambda >0$
\[ \lambda \|\x-\y\|_2 \leq \|f(\x)-f(\y)\|_2\leq \lambda D \|\x-\y\|_2 \quad \forall \, \x , \y \in X,\]
where $\|\cdot\|_2$ represents the Euclidean norm. Let $X\subseteq Y \subseteq \mathbbm{R}^N$, $m\leq m' \in \mathbb{N}$, and consider $f\colon  X\longrightarrow \mathbbm{R}^m$. We say that $f' \colon Y \longrightarrow \mathbbm{R}^{m'}$ is an \textbf{outer extension} of $f$ if $f(\x)=f'(\x)$ for all $\x \in X$, where we identify points $(\x_1,\ldots,\x_m) \in \mathbbm{R}^m$ and $(\x_1,\ldots,\x_m,0,\ldots,0) \in \mathbbm{R}^{m'}$. Mahabadi et al. proved in \cite{mahabadi2018nonlinear} that every bi-Lipschitz map $f \colon X \longrightarrow \mathbbm{R}^m$ with distortion $D$ has an outer extension $f'\colon \mathbbm{R}^N \longrightarrow \mathbbm{R}^{N+m}$ with distortion at most $3D$. Moreover, they showed that when the map $f$ is a Johnson-Lindenstrauss embedding, the dimension $N+m$ can be improved to $m+1$ in exchange of obtaining distortion $D^2$. Recall that the Johnson-Lindenstrauss lemma \cite{johnson1984extensions} states that for $\varepsilon \in (0,1)$ and a finite set $X \subseteq \mathbbm{R}^N$ with $n>1$ elements, there exists a matrix $\Phi \in \mathbbm{R}^{m \times N}$ with $m=\mathcal{O}(\varepsilon^{-2}\log n)$ such that
\[ (1-\varepsilon)\|\x-\y\|_2 \leq \|\Phi\x-\Phi\y\|_2 \leq (1+\varepsilon)\|\x-\y\|_2 \quad \forall\, \x,\y \in X.\]
Moreover, it has been recently shown that the dimension $m$ of the Euclidean space where $X$ is embedded is optimal (see \cite{larsen2017optimality}). In this work, we focus on the study of the following stronger notion of embedding. We say that $f \colon \mathbbm{R}^N \longrightarrow \mathbbm{R}^m$ is a \textbf{terminal embedding} for $X \subseteq \mathbbm{R}^N$ with distortion $\varepsilon>0$ if
\begin{equation}\label{eq: terminal embedding} 
	(1-\varepsilon)\|\x-\y\|_2 \leq \|f(\x)-f(\y)\|_2 \leq (1+\varepsilon)\|\x-\y\|_2 \quad \forall \, \x \in X \quad \forall \, \y \in \mathbbm{R}^N.
\end{equation}
This class of embeddings was introduced by Elkin, Filtser, and Neiman in \cite{elkin2017terminal}, where terminal embeddings for a finite $X\subseteq \mathbbm{R}^N$ of bounded distortion are constructed. This was improved by Mahabadi et al. in \cite{mahabadi2018nonlinear}, where the authors consider outer extensions of Johnson-Lindenstrauss embeddings to construct terminal embeddings with arbitrarily small distortion. The target dimension obtained in \cite{mahabadi2018nonlinear} was $m=\mathcal{O}(\varepsilon^{-4}\log n)$, where $n$ is the cardinality of the finite set $X$. Later on, this was improved by Narayan and Nelson in \cite{narayanan2019optimal}, where the optimal dimension $m=\mathcal{O}(\varepsilon^{-2}\log n)$ was obtained. These contributions ultimately led to the following remarkable result: Let $\varepsilon \in (0,1)$ and let $X \subseteq \mathbbm{R}^N$ be a finite set of size $n>1$. There exists $f \colon \mathbbm{R}^N \longrightarrow \mathbbm{R}^m$ with $m=\mathcal{O}(\varepsilon^{-2} \log n)$ such that
\[ (1-\varepsilon)\|\x-\y\|_2 \leq \|f(\x)-f(\y)\|_2 \leq (1+\varepsilon)\|\x-\y\|_2 \quad \forall \, \x \in X \quad \forall \, \y \in \mathbbm{R}^N.\]

In this paper we present a generalization of this result that holds for arbitrary subsets $X \subseteq \mathbbm{R}^N$ where the role that the cardinality of $X$ plays in the target dimension $m$ is instead replaced by the Gaussian width of the set of the unit secants of $X$.  
In the special case where $X=\mathcal{M}$ is a submanifold of $\mathbbm{R}^N$, recent results bounding the requisite Gaussian width in terms of other fundamental geometric quantities (e.g., their reach, dimension, volume, etc.) \cite{iwen2024on} can then be brought to bear in order to produce terminal embeddings of $\mathcal{M}$ into $\mathbbm{R}^m$ satisfying (\ref{eq: terminal embedding}) with a target dimension $m$ of optimal order.\footnote{Lower bounds on achievable embedding dimensions $m$ for submanifolds of $\mathbbm{R}^N$ are discussed in \cite{iwen2023lower}.}  Moreover, our proofs are constructive and provide us with an algorithm that can be used to generate terminal embeddings in practice.  Motivating applications of terminal embeddings of submanifolds of $\mathbbm{R}^N$ related to compressive classification via manifold models \cite{davenport2007the} are discussed next.

\subsection{Universally Accurate Compressive Classification via Noisy Manifold Data}
\label{sec:motivating_application}

It is one of the sad facts of life that most everyone eventually comes to accept:  everything living must eventually die, you can't always win, you aren't always right, and -- worst of all to the most dedicated of data scientists -- there is always noise contaminating your datasets.  Nevertheless, there are mitigating circumstances and achievable victories implicit in every statement above -- most pertinently here, there are mountains of empirical evidence that noisy training data still permits accurate learning.  In particular, when the noise level is not too large, the mere existence of a low-dimensional data model which only approximately fits your noisy training data can still allow for successful, e.g., nearest-neighbor classification using only a highly compressed version of your original training dataset (even when you know very little about the model specifics) \cite{davenport2007the}.  Better quantifying these empirical observations in the context of low-dimensional manifold models is the primary motivation for our main manifold result below.

For example, let $\mathcal{M} \subseteq \mathbbm{R}^N$ be a $d$-dimensional submanifold of $\mathbbm{R}^N$ (our data model), fix $\delta \in \mathbbm{R}^+$ (our effective noise level), and choose $T \subseteq {\rm tube}(\delta,\mathcal{M}) := \left \{ {\bf x} ~|~ \exists {\bf y} \in \mathcal{M} ~{\rm with}~ \| {\bf x} - {\bf y} \|_2 \leq \delta \right\}$ (our ``noisy'' and potentially high-dimensional training data).  Fix $\varepsilon \in (0,1)$.  For a terminal embedding $f: \mathbbm{R}^N \rightarrow \mathbbm{R}^m$ of $\mathcal{M}$ as per (\ref{eq: terminal embedding}), one can use the triangle inequality to see that
\begin{equation}
 (1-\varepsilon)\left\| {\bf z} - {\bf t} \right\|_2 -2(1+\varepsilon) \delta \leq \left\| f({\bf z}) - f( {\bf t}) \right\|_2 \leq (1 + \varepsilon) \left\| {\bf z} - {\bf t} \right\|_2 + 2(1+\varepsilon) \delta 
\label{equ:DisttoMan}
\end{equation}
will hold simultaneously for all ${\bf t} \in T$ and ${\bf z} \in \mathbbm{R}^N$, {\it where $f$ has an embedding dimension that only depends on the geometric properties of ${\mathcal M}$ (and not necessarily on $|T|$)}. Thus, if $T$ includes a sufficiently dense external cover of $\mathcal{M}$, then $f$ will allow us to approximate the distance of all ${\bf z} \in \mathbbm{R}^N$ to $\mathcal{M}$ in the compressed embedding space via the estimator 
\begin{equation}
\tilde{d}(f({\bf z}),f(T)) := \inf_{{\bf t } \in T} \left\| f({\bf z}) - f( {\bf t}) \right\|_2 \approx d({\bf z},\mathcal{M}) := \inf_{{\bf y} \in \mathcal{M}} \| {\bf z} - {\bf y} \|_2
\label{equ:CompressedEstimator}
\end{equation} 
up to $\mathcal{O}(\delta)$-error.  As a result, if one has noisy data from two disjoint manifolds $\mathcal{M}_1,\mathcal{M}_2 \subseteq \mathbbm{R}^N$, one can use this compressed $\tilde{d}$ estimator to correctly classify all data ${\bf z} \in {\rm tube}(\delta,\mathcal{M}_1) \bigcup {\rm tube}(\delta,\mathcal{M}_2)$ as being in either $T_1 := {\rm tube}(\delta,\mathcal{M}_1)$ (class 1) or $T_2 := {\rm tube}(\delta,\mathcal{M}_2)$ (class 2) as long as $\displaystyle \inf_{{\bf x} \in T_1, {\bf y} \in T_2} \| {\bf x} - {\bf y} \|_2$ is sufficiently large.  In short, terminal manifold embeddings demonstrate that accurate compressive nearest-neighbor classification based on noisy manifold training data is always possible as long as the manifolds in question are sufficiently far apart (though not necessarily separable from one another by, e.g., a hyperplane, etc.).  

Note that in the discussion above we may in fact take $T = {\rm tube}(\delta,\mathcal{M})$. In that case \eqref{equ:DisttoMan} will hold simultaneously for all ${\bf z} \in \mathbbm{R}^N$ and $({\bf t}, \delta) \in \mathbbm{R}^N \times \mathbbm{R}^+$ with ${\bf t} \in {\rm tube}(\delta,\mathcal{M})$ so that $f:  \mathbbm{R}^N \rightarrow \mathbbm{R}^m$ will approximately preserve the distances of all points ${\bf z} \in \mathbbm{R}^N$ to ${\rm tube}(\delta,\mathcal{M})$ up to errors on the order of $\mathcal{O}(\varepsilon) d({\bf z}, {\rm tube}(\delta,\mathcal{M})) + \mathcal{O}(\delta)$ for all $\delta \in \mathbbm{R}^+$.  This is in fact rather remarkable when one recalls that the best achievable embedding dimension, $m$, here only depends on the geometric properties of the low-dimensional manifold $\mathcal{M}$ (see Corollary~\ref{cor:main} for a detailed accounting of these dependences).

\subsection{The Main Result and Applications}

Let $X$ be an arbitrary subset of $\mathbbm{R}^N$. The \textbf{Gaussian width} of $X$ is  
\begin{align*}
	w(X) := \mathbb{E} \sup_{{\bf x} \in X} \, \langle {\bf g},{\bf x} \rangle,
\end{align*}
where ${\bf g}$ is a random vector with $N$ independent entries following a standard normal distribution. The Gaussian width is considered one of the basic geometric quantities associated with subsets $X$ of $\mathbbm{R}^N$, such as volume and surface area. Indeed, this quantity plays a central role in high-dimensional probability and its applications. For a nice introduction to the notion of Gaussian width we refer the interested reader to \cite[Chapter 7]{vershynin2018high-dimensional}. We further define the \textbf{unit secants} of $X$ to be the set
\begin{equation}\label{eq: unit secants}
	 S_X := \overline{\left \{ \frac{\x-\y}{\|\x-\y\|_2} \colon \x \neq \y \in X\right \}}.
	\end{equation}
Our main result allows its users to embed an arbitrary subset $X\subseteq \mathbbm{R}^N$ into a lower dimensional space $\mathbbm{R}^m$ while not only (approximately) preserving the geometric structure of $X$, but while also (approximately) preserving the distances of all points in $\mathbbm{R}^N \setminus X$ to $X$. In addition, the target dimension $m$ obtained in the process is optimal up to constants.

\begin{Theorem}\label{theo: main}
	Let $X \subseteq \mathbbm{R}^N$. For any $\varepsilon \in (0,1)$ there exists an $f \colon \mathbbm{R}^N \longrightarrow \mathbbm{R}^{m}$ with $m=\mathcal{O}(\varepsilon^{-2} \omega(S_X)^2)$ such that
	\begin{equation}\label{eq: theo main} 
		(1-\varepsilon)\|\x-\y\|_2 \leq \|f(\x)-f(\y)\|_2 \leq (1+\varepsilon)\|\x-\y\|_2 \quad \forall \, \x \in X \quad \forall \, \y \in \mathbbm{R}^N.
	\end{equation}
\end{Theorem}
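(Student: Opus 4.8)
The plan is to follow the two-stage blueprint that produced the optimal finite-set terminal embeddings of Narayanan and Nelson, generalizing each stage from the cardinality regime (where $\log n$ governs the dimension) to the Gaussian-width regime. After translating so that a fixed basepoint $\x_0 \in X$ sits at the origin (terminal embeddings are translation invariant, so this is harmless), the first stage is to produce a linear near-isometry of $X$. I would invoke Gordon's ``escape through a mesh'' theorem applied to the compact set $S_X \subseteq S^{N-1}$: a suitably scaled Gaussian matrix $\Phi \in \mathbb{R}^{m' \times N}$ with $m' = \mathcal{O}(\varepsilon^{-2}\omega(S_X)^2)$ satisfies, with positive probability, $\big| \|\Phi \s\|_2 - 1\big| \le c\varepsilon$ for every unit secant $\s \in S_X$. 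By homogeneity this gives $\big|\|\Phi(\x-\x')\|_2 - \|\x-\x'\|_2\big| \le c\varepsilon\|\x-\x'\|_2$ for all $\x,\x' \in X$, and since $\x_0=0\in X$ the points $\x/\|\x\|_2$ are themselves secants, so polarization upgrades this to approximate preservation of inner products on $X$. This $\Phi$ is the object I will extend by a single coordinate.

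For the second stage I would set $f(\x) = (\Phi\x,\,0)$ for $\x\in X$ and, for each external point $\y\in\mathbb{R}^N$, put $f(\y)=\big(v_\y,\ \sqrt{\|\y\|_2^2-\|v_\y\|_2^2}\,\big)\in\mathbb{R}^{m'+1}$, where $v_\y\in\mathbb{R}^{m'}$ with $\|v_\y\|_2\le\|\y\|_2$ remains to be chosen. Because the last coordinate is calibrated so that $\|v_\y\|_2^2$ plus its square equals $\|\y\|_2^2$, expanding the squared distance yields the exact identity
\[ \|f(\x)-f(\y)\|_2^2 - \|\x-\y\|_2^2 \;=\; \big(\|\Phi\x\|_2^2-\|\x\|_2^2\big) \;-\; 2\big(\langle\Phi\x,v_\y\rangle - \langle\x,\y\rangle\big). \]
The first parenthesis is $\mathcal{O}(\varepsilon)\|\x\|_2^2$ by the first stage, so the entire problem reduces to producing, for each $\y$, a single vector $v_\y$ in the ball of radius $\|\y\|_2$ whose induced functional $\x\mapsto\langle\Phi\x,v_\y\rangle$ tracks $\x\mapsto\langle\x,\y\rangle$ simultaneously over all $\x\in X$.

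The existence of such a $v_\y$ is the crux. I would obtain it from a minimax argument: consider the value
\[ \min_{\|v\|_2\le\|\y\|_2}\ \sup_{\x\in X}\ \big(\langle\x,\y\rangle - \langle\Phi\x,v\rangle\big) \]
(with an appropriate scale in the denominator), and apply Sion's minimax theorem to exchange the order of optimization. This exchange is legitimate precisely because the ball $\{\|v\|_2\le\|\y\|_2\}$ is convex and compact and, thanks to the closure built into the definition of $S_X$, the relevant secant set is compact, so the suprema over the infinite set $X$ are attained. On the resulting maximin side, the adversary's optimal mixed strategy over $X$ aggregates into a single secant direction of $X$, on which the approximate isometry of $\Phi$ from the first stage forces the value to be $\mathcal{O}(\varepsilon)$. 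Feeding the guaranteed bound on $\langle\Phi\x,v_\y\rangle-\langle\x,\y\rangle$ back into the displayed identity yields the two-sided bound \eqref{eq: theo main} after rescaling $\varepsilon$ by an absolute constant, at target dimension $m=m'+1=\mathcal{O}(\varepsilon^{-2}\omega(S_X)^2)$.

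I expect the genuine obstacle to be calibrating this argument so that the error is \emph{relative}, i.e.\ proportional to $\|\x-\y\|_2$ rather than to $\|\x\|_2\|\y\|_2$; the naive bookkeeping in the displayed identity degrades exactly in the regime where $\y$ is close to $X$, where $\|\x-\y\|_2$ is tiny but $\|\x\|_2$ and $\|\y\|_2$ need not be. This is why a one-line reduction to the Mahabadi et al.\ ``$m+1$, distortion $D^2$'' outer extension does not deliver the optimal $\varepsilon^{-2}$ order, and it is what forces the careful choice of the minimax functional and its normalization, together with a first-stage guarantee strong enough to control the aggregated secant produced on the maximin side. The remaining effort — verifying that the suprema are attained and that Sion's theorem applies over an infinite $X$ — is precisely what the closure in $S_X$ and the Gaussian-width (rather than union-bound) form of Gordon's theorem are there to supply.
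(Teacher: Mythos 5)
Your overall blueprint (a linear map controlled on the unit secants, extended by one extra coordinate via a minimax-produced vector $v_\y$) is indeed the paper's strategy, but your construction as written has a genuine gap that you identify yourself and then do not close: with everything centered at a \emph{fixed} basepoint $\x_0=0$, the error terms in your exact identity are of size $\varepsilon\|\x\|_2^2$ and $\varepsilon\|\x\|_2\|\y\|_2$, which cannot be converted into the required relative error $\varepsilon\|\x-\y\|_2^2$ --- precisely in the regime where $\y$ is close to $X$ the claim fails. The fix is not a ``careful choice of the minimax functional and its normalization''; it is a structural change to the extension: for each $\y\notin\overline{X}$ one must center the construction at the nearest point $\y_{\overline{X}}=\operatorname{argmin}_{\x\in\overline{X}}\|\y-\x\|_2$, setting $f(\y)=\bigl(\Phi\y_{\overline{X}}+g(\y),\sqrt{\|\y-\y_{\overline{X}}\|_2^2-\|g(\y)\|_2^2}\,\bigr)$, where $g(\y)$ satisfies $\|g(\y)\|_2\le\|\y-\y_{\overline{X}}\|_2$ and tracks $\langle\,\cdot\,,\Phi(\x-\y_{\overline{X}})\rangle$ against $\langle\y-\y_{\overline{X}},\x-\y_{\overline{X}}\rangle$ up to $\varepsilon\|\y-\y_{\overline{X}}\|_2\|\x-\y_{\overline{X}}\|_2$. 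Only then do the elementary inequalities $\|\y-\y_{\overline{X}}\|_2\le\|\x-\y\|_2$ and $\|\x-\y_{\overline{X}}\|_2\le 2\|\x-\y\|_2$ turn every error term into a multiple of $\|\x-\y\|_2^2$; this nearest-point centering is the load-bearing idea of the proof and is absent from your construction.

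Two further points need repair. First, your claim that on the maximin side ``the adversary's optimal mixed strategy over $X$ aggregates into a single secant direction of $X$'' is false: a mixed strategy aggregates into a point of $\operatorname{conv}(S_X)$, which is generally not a secant, so a Gordon-type near-isometry on $S_X$ alone is insufficient. One needs the stronger $\varepsilon$-\emph{convex hull distortion} property, $\bigl|\,\|\Phi\x\|_2-\|\x\|_2\bigr|<\varepsilon$ for all $\x\in\operatorname{conv}(S_X)$; the paper gets this at the same dimension $m=\mathcal{O}(\varepsilon^{-2}\omega(S_X)^2)$ because $w(\operatorname{conv}(S_X))=w(S_X)$ and $\operatorname{rad}(\operatorname{conv}(S_X))\le 1$ (its Corollary 3.2, via the sub-Gaussian matrix deviation inequality). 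Second, your direct appeal to Sion's theorem over an infinite $X$ glosses over the need for compactness/semicontinuity of the mixed-strategy space; the paper instead reduces to a finite $\frac{\varepsilon}{4}$-cover of the normalized secants through $\y_{\overline{X}}$, applies the finite-set minimax lemma of Narayanan--Nelson there, and then extends the inner-product control to all of $X$ by an approximation argument that again uses convex hull distortion. That route, or a fully justified infinite-dimensional minimax, is required to make your second stage rigorous.
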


\begin{remark}
	The proof of Theorem \ref{theo: main} is constructive and provides an algorithm to generate terminal embeddings. For $\varepsilon \in (0,1)$, let $\Phi \in \mathbbm{R}^{m \times N}$ be a linear embedding providing $\frac{\varepsilon}{60}$-convex hull distortion (see (\ref{eq: convex hull distortion}). Observe that Corollary \ref{coro:subgaussiandistorion} guarantees the existence of such $\Phi$ if $m$ is large enough. Additionally, let $g \colon \mathbbm{R}^N \longrightarrow \mathbbm{R}^m$ be a function so that $g(\y)$ satisfies conditions (1) and (2) in Lemma \ref{lem: u'} for $\frac{\varepsilon}{10}$ for all $\y \in \mathbbm{R}^N$. Finally, given $\y \in \mathbbm{R}^N$ let $\y_{\overline{X}}$ be a point in $\overline{X}$ minimizing the distance between $\y$ and $\overline{X}$. Then the proof of Theorem \ref{theo: main} shows that the function $f \colon \mathbbm{R}^N \longrightarrow \mathbbm{R}^{m+1}$ given by
	\[ f(\y)=\left (\Phi\y_{\overline{X}}+g(\y), \sqrt{\|\y-\y_{\overline{X}}\|_2^2 - \|g(\y)\|_2^2}\right ) \quad \forall \, \y \in \mathbbm{R}^N\]
	provides a terminal embedding for $X$ satisfying (\ref{theo: main}).
\end{remark}

Bounds for the Gaussian width of numerous set families can be found in the literature. For instance, when $X \subseteq \mathbbm{R}^N$ is a finite set with $n$ elements, then $\omega(X) \leq c\sqrt{\log n} \operatorname{diam}(X)$, where $c$ is a universal constant and $\operatorname{diam}(X)=\sup_{\x,\y \in X} \|\x-\y\|$ (see Exercise 7.5.10 in \cite{vershynin2018high-dimensional}). Consequently, Theorem \ref{theo: main} provides dimensionality reduction for finite sets of the same order as Nayaran and Nelson's result, which is optimal as previously noted. Thus, the dimensionality reduction provided by Theorem \ref{theo: main} is optimal as well. A more interesting consequence of Theorem \ref{theo: main} appears when we consider $d$-dimensional submanifolds of $\mathbbm{R}^N$. Recall that given a set $X \subseteq \mathbbm{R}^N$, the reach of $X$ measures how far away points can be from $X$ while still having a unique closest point in $X$ \cite{federer1959curvature}. Formally, the \textbf{reach} of $X$ is defined as
\[
\tau_X := \sup \left\{ t \geq 0  ~\big|~ \, \forall \, {\bf x} \in \mathbbm{R}^N \text{ such that } d({\bf x},X) < t, \, {\bf x} \text{ has a unique closest point in } \overline{X} \right\}.
\]
The following result is a restatement of Theorem 4.5 in \cite{iwen2024on}. It bounds the Gaussian width of the unit secants of a smooth submanifold of $\mathbbm{R}^N$ in terms of the manifold's dimension, reach, and volume. 

\begin{Lemma}\label{GaussianWidthOfManifodWithBoundaryViaGunther}
	Let $\mathcal{M}$ be a compact $d$-dimensional submanifold of $\mathbbm{R}^N$ 
	with boundary $\partial \mathcal{M}$, finite reach $\tau_{\mathcal{M}}$, and volume  $V_{\mathcal M}$.  Enumerate the connected components of $\partial \mathcal{M}$ and let $\tau_i$ be the reach of the $i^{\rm th}$ connected component of $\partial \mathcal M$ as a submanifold of $\mathbbm{R}^N$. 
	Set $\tau := \min_{i} \{\tau_{\mathcal M}, \tau_i \}$, let $V_{\partial \mathcal{M}}$ be the volume of $\partial \mathcal{M}$, and denote the volume of the $d$-dimensional Euclidean ball of radius $1$ by $\omega_d$.  Next, 
	\begin{enumerate}
		\item if $d=1$, define $\alpha_{\mathcal M} :=  \frac{20 V_{\mathcal M}}{\tau}  + V_{\partial {\mathcal M}}$, else
		\item if $d \geq 2$, define $\alpha_{\mathcal M} :=
		\frac{V_\mathcal{M}}{ \omega_d} \left(\frac{41}{\tau} \right)^d 
		+ \frac{V_{\partial \mathcal{M}}}{ \omega_{d-1}} \left(\frac{81}{\tau} \right)^{d-1}$.  
	\end{enumerate}
	Finally, define 
	\begin{align}
		\beta_{\mathcal{M}} &:= \left(\alpha_{\mathcal M}^2 +3^{d} \alpha_{\mathcal M} \right). \label{equ:betadef}
	\end{align}
	Then, the Gaussian width of the unit secants of $\mathcal{M}$ satisfies $$w \left(  S_{\mathcal M} \right)  \leq 8\sqrt{2}\sqrt{\log \left(\beta_{\mathcal{M}} \right)+4d}.$$
\end{Lemma}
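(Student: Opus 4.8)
The plan is to route the bound through metric entropy: I would estimate the Euclidean covering numbers $N(S_{\mathcal M},\varepsilon)$ of the unit secants in terms of $V_{\mathcal M}$, $V_{\partial\mathcal M}$, $\tau$, and $d$, and then feed these into a Dudley-type chaining bound to recover a Gaussian width estimate of the form $w(S_{\mathcal M})\lesssim \sqrt{\log\beta_{\mathcal M}}+\sqrt{d}$. The first and most geometric step is to build a net for $\mathcal M$ itself. Finite reach $\tau_{\mathcal M}$ bounds the second fundamental form of $\mathcal M$ by $1/\tau$, hence (via the Gauss equation) bounds the sectional curvatures above by $\mathcal{O}(1/\tau^2)$, so G\"unther's volume comparison theorem gives a lower bound $\operatorname{vol}(B_r)\gtrsim \omega_d r^d$ on the volume of small geodesic balls. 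A packing argument then produces a cover of $\mathcal M$ of cardinality $\lesssim V_{\mathcal M}/(\omega_d r^d)$ with $r$ proportional to $\tau$, which is exactly the interior contribution to $\alpha_{\mathcal M}$ in the case $d\geq 2$; the case $d=1$ is handled directly, since a geodesic ball on a curve is an arc of length $\approx 2r$, giving the $20 V_{\mathcal M}/\tau$ term.

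Next I would control the secant directions relative to this net at two scales, which is where the structure $\beta_{\mathcal M}=\alpha_{\mathcal M}^2+3^d\alpha_{\mathcal M}$ comes from. For ``long'' secants $(\x-\y)/\|\x-\y\|_2$ with $\|\x-\y\|_2$ bounded below, replacing $\x,\y$ by their nearest net points perturbs the unit secant by a controlled amount (the denominator is bounded away from zero), so these are covered by the $\mathcal{O}(\alpha_{\mathcal M}^2)$ normalized differences of pairs of net points, yielding the $\alpha_{\mathcal M}^2$ term. For ``short'' secants with $\|\x-\y\|_2$ small relative to $\tau$, the reach estimate that the angle between $\x-\y$ and the tangent space $T_\x\mathcal M$ is $\mathcal{O}(\|\x-\y\|_2/\tau)$ shows that each such secant lies near a unit tangent vector at a net point; since a fixed-scale cover of each tangent sphere $S^{d-1}$ costs $\lesssim 3^d$, the short secants are covered by $\lesssim 3^d\alpha_{\mathcal M}$ directions. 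Combining gives $N(S_{\mathcal M},c_0)\lesssim\beta_{\mathcal M}$ at a fixed scale $c_0$, and carrying the volumetric $(1/\varepsilon)^d$ growth through finer scales makes Dudley's entropy integral converge to $\lesssim\sqrt{\log\beta_{\mathcal M}}+\sqrt{d}$; tracking constants is what lands the bound on the explicit $8\sqrt2\,\sqrt{\log(\beta_{\mathcal M})+4d}$.

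The boundary forces two separate adjustments. First, the tangent-approximation step fails for $\x$ within $\mathcal{O}(r)$ of $\partial\mathcal M$, since one cannot always move along $\mathcal M$ in the direction needed to realize a given short secant; I would cover a tubular neighborhood of $\partial\mathcal M$ inside $\mathcal M$ by netting $\partial\mathcal M$ as a compact $(d-1)$-submanifold with its own reach $\tau_i$ and reapplying the G\"unther volume/packing estimate one dimension down, which yields the $\frac{V_{\partial\mathcal M}}{\omega_{d-1}}(\cdot/\tau)^{d-1}$ term. Second, I must take $\tau=\min_i\{\tau_{\mathcal M},\tau_i\}$ so that the interior curvature bound and every boundary packing bound hold at a single common scale.

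I expect the main obstacle to be the reach-based geometry together with tight bookkeeping of constants: verifying that a short secant genuinely makes angle $\mathcal{O}(\|\x-\y\|_2/\tau)$ with the tangent space, that the curvature bound feeding G\"unther is the correct $\mathcal{O}(1/\tau^2)$, and that the boundary net cleanly absorbs precisely the points where the interior argument degenerates -- all while keeping constants sharp enough to reach the stated $8\sqrt2$ and $4d$. By contrast, the passage from the two-scale covering estimate $N(S_{\mathcal M},\varepsilon)\lesssim \beta_{\mathcal M}(C/\varepsilon)^d$ to the Gaussian width bound is comparatively routine.
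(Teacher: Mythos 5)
A point of order first: the paper you were given never proves this lemma. It is imported verbatim as ``a restatement of Theorem 4.5 in \cite{iwen2024on}'', and the only role it plays here is as input to Corollary~\ref{cor:main}. So there is no internal proof to compare your attempt against; the relevant comparison is with the proof in the cited reference. With that said, your outline reconstructs what, by every available indication (the lemma's own label ``ViaGunther'', the additive structure $\beta_{\mathcal M}=\alpha_{\mathcal M}^2+3^d\alpha_{\mathcal M}$, the interior/boundary split in $\alpha_{\mathcal M}$, and the fact that $81$ is roughly twice $41$), is the same strategy used there: reach bounds the second fundamental form by $1/\tau$, hence sectional curvature by $\mathcal{O}(1/\tau^2)$, so G\"unther's volume comparison gives a packing/covering bound for $\mathcal{M}$ of cardinality $\alpha_{\mathcal M}$ at scale proportional to $\tau$; unit secants are split into ``long'' ones, covered by normalized differences of pairs of net points (the $\alpha_{\mathcal M}^2$ term), and ``short'' ones, which the reach-based secant-to-tangent inequality places near unit tangent vectors at net points (at most $3^d$ directions per net point, a $1$-net of $S^{d-1}$); and a Dudley/chaining step converts the multiscale covering bound into a width bound, with $\sqrt{a}+\sqrt{b}\leq\sqrt{2}\sqrt{a+b}$ accounting for the $\sqrt{\log(\beta_{\mathcal M})+4d}$ form. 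This is the right skeleton.

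Two caveats. First, your proposal is a plan rather than a proof: every quantitative ingredient (the curvature bound feeding G\"unther, the angle inequality for short secants, the comparison of tangent spaces at nearby points, and all of the constants $20$, $41$, $81$, $8\sqrt{2}$, $4d$) is deferred, and those computations constitute essentially all of the work in the cited argument. Second, your stated reason for the boundary term is off. The tangent-approximation step does \emph{not} fail near $\partial\mathcal{M}$: the inequality bounding the angle between a short secant and the tangent space holds at every point of a set of positive reach, boundary or not. What actually breaks near the boundary is the \emph{volume} step: a geodesic ball centered within distance $\mathcal{O}(r)$ of $\partial\mathcal{M}$ is truncated by the boundary, so G\"unther no longer yields $\mathrm{vol}\gtrsim\omega_d r^d$ and the packing count degenerates there. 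The repair --- which your own description of $\alpha_{\mathcal M}$ encodes --- is to cover the boundary collar by balls of roughly doubled radius centered at a net of $\partial\mathcal{M}$, with $\partial\mathcal{M}$ netted one dimension down using its own reach $\tau_i$ (this is also why $\tau=\min_i\{\tau_{\mathcal M},\tau_i\}$ is needed). You land on the correct fix, but for the wrong reason, and in a written-out proof that misdiagnosis would surface as a gap. A minor further point: the fine-scale covering exponent of $S_{\mathcal M}$ is of order $2d$ (it is parametrized by pairs of points), not $d$; this only shifts constants in the entropy integral, consistent with the $4d$ inside the square root.
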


In view of the previous bound, the following result is a direct consequence of Theorem \ref{theo: main}

\begin{Corollary}\label{cor:main}
	Let $\mathcal{M}$ be a compact $d$-dimensional submanifold of $\mathbbm{R}^N$ 
	with boundary $\partial \mathcal{M}$, finite reach $\tau_{\mathcal{M}}$, and volume  $V_{\mathcal M}$.  Enumerate the connected components of $\partial \mathcal{M}$ and let $\tau_i$ be the reach of the $i^{\rm th}$ connected component of $\partial \mathcal M$ as a submanifold of $\mathbbm{R}^N$. 
	Set $\tau := \min_{i} \{\tau_{\mathcal M}, \tau_i \}$, let $V_{\partial \mathcal{M}}$ be the volume of $\partial \mathcal{M}$, and denote the volume of the $d$-dimensional Euclidean ball of radius $1$ by $\omega_d$. Set $\beta_{\mathcal{M}}$ as in (\ref{equ:betadef}). Then, for any $\varepsilon \in (0,1)$ there exists $f \colon \mathbbm{R}^N \longrightarrow \mathbbm{R}^m$ with $m=\mathcal{O}(\varepsilon^{-2}(\log (\beta_{\mathcal{M}}) + d))$ such that
	\[ (1-\varepsilon)\|\x-\y\|_2 \leq \|f(\x)-f(\y)\|_2 \leq (1+\varepsilon)\|\x-\y\|_2 \quad \forall \, \x \in \mathcal{M} \quad \forall \, \y \in \mathbbm{R}^N.\]
\end{Corollary}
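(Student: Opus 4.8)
The plan is to obtain Corollary~\ref{cor:main} as an immediate composition of Theorem~\ref{theo: main} with the Gaussian-width bound of Lemma~\ref{GaussianWidthOfManifodWithBoundaryViaGunther}; the substantive geometric work has already been carried out in those two results, so what remains is essentially a substitution together with some bookkeeping inside the $\mathcal{O}(\cdot)$.

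First I would observe that a compact $d$-dimensional submanifold $\mathcal{M} \subseteq \mathbbm{R}^N$ is in particular a subset of $\mathbbm{R}^N$, so Theorem~\ref{theo: main} applies verbatim with $X = \mathcal{M}$. This produces, for any $\varepsilon \in (0,1)$, a function $f \colon \mathbbm{R}^N \longrightarrow \mathbbm{R}^m$ satisfying the desired two-sided distortion bound for all $\x \in \mathcal{M}$ and $\y \in \mathbbm{R}^N$, with target dimension $m = \mathcal{O}(\varepsilon^{-2}\,\omega(S_{\mathcal{M}})^2)$, where $S_{\mathcal{M}}$ is the set of unit secants of $\mathcal{M}$ as in~(\ref{eq: unit secants}). (Here one uses that compactness guarantees $S_{\mathcal{M}}$ is a well-defined bounded set, so its Gaussian width is finite.)

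Next, since $\mathcal{M}$ is assumed to satisfy exactly the hypotheses of Lemma~\ref{GaussianWidthOfManifodWithBoundaryViaGunther} — compactness, finite reach $\tau_{\mathcal{M}}$, the connected-component boundary reaches $\tau_i$, and the volumes $V_{\mathcal{M}}$ and $V_{\partial\mathcal{M}}$ are precisely the data listed in the corollary, and $\beta_{\mathcal{M}}$ is defined identically via~(\ref{equ:betadef}) — I would invoke that lemma to bound
\[ \omega(S_{\mathcal{M}})^2 \leq 128\left(\log(\beta_{\mathcal{M}}) + 4d\right), \]
simply by squaring the stated estimate $w(S_{\mathcal{M}}) \leq 8\sqrt{2}\sqrt{\log(\beta_{\mathcal{M}}) + 4d}$. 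Substituting this into the dimension count from the previous step yields $m = \mathcal{O}\!\left(\varepsilon^{-2}(\log(\beta_{\mathcal{M}}) + 4d)\right) = \mathcal{O}\!\left(\varepsilon^{-2}(\log(\beta_{\mathcal{M}}) + d)\right)$, absorbing the factor $4$ and the constant $128$ into the implied constant, which is exactly the claimed bound.

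I do not expect a genuine obstacle here beyond matching notation and constants: both ingredients are already phrased in forms that line up with the corollary's hypotheses, so the only care needed is the harmless constant factor produced by squaring and its absorption into $\mathcal{O}(\cdot)$. The real depth of the statement lies entirely in Theorem~\ref{theo: main} (the existence of an optimal-dimension terminal extension for arbitrary $X$) and in the reach/volume estimate of Lemma~\ref{GaussianWidthOfManifodWithBoundaryViaGunther}; the corollary merely specializes the former to the manifold setting by feeding it the latter.
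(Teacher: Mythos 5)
Your proposal is correct and follows exactly the paper's route: the paper's proof is the one-line observation that Corollary~\ref{cor:main} follows from Theorem~\ref{theo: main} applied to $X = \mathcal{M}$ together with the Gaussian width bound of Lemma~\ref{GaussianWidthOfManifodWithBoundaryViaGunther}. Your version merely spells out the squaring of the bound $w(S_{\mathcal M}) \leq 8\sqrt{2}\sqrt{\log(\beta_{\mathcal M}) + 4d}$ and the absorption of constants into the $\mathcal{O}(\cdot)$, which is the same argument made explicit.
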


We further note that alternate applications of Theorem~\ref{theo: main} involving other data models are also possible.  Suppose that $\mathcal{M}$ is a union of $n$ $d$-dimensional affine subspaces so that its unit secants, $S_{\mathcal{M}}$ defined as per (\ref{eq: unit secants}), are contained in the union of at most ${n \choose 2} + n$ unit spheres $\subseteq \mathbb{S}^{N-1}$, each of dimension at most $2d+1$.  The Gaussian width of $S_{\mathcal{M}}$ can then be upper-bounded by $C \sqrt{d+\log n}$ using standard techniques, where $C \in \mathbbm{R}^+$ is an absolute constant.  
An application of Theorem~\ref{theo: main} now guarantees the existence of a terminal embedding $f: \mathbbm{R}^N \rightarrow \mathbbm{R}^{\mathcal{O}\left(\frac{d + \log n}{\varepsilon^2}\right)}$ which will allow approximate nearest subspace queries to be answered for any input point ${\bf z} \in \mathbbm{R}^N$ using only $f({\bf z})$ in the compressed $\mathcal{O}\left(\frac{d + \log n}{\varepsilon^2}\right)$-dimensional space.  Even more specifically, if we choose, e.g., $\mathcal{M}$ to consist of all at most $s$-sparse vectors in $\mathbbm{R}^N$ (i.e., so that $\mathcal{M}$ is the union of $n = {N \choose s}$ subspaces of $\mathbbm{R}^N$), we can now see that Theorem~\ref{theo: main} guarantees the existence of a deterministic compressed estimator \eqref{equ:CompressedEstimator} which allows for the accurate approximation of the best $s$-term approximation error $\displaystyle \inf_{{\bf y} \in \mathbbm{R}^N ~\textrm{at most}~s~\textrm{sparse}} \| {\bf z} - {\bf y} \|_2$ for all ${\bf z} \in \mathbbm{R}^N$ using only $f({\bf z}) \in \mathbbm{R}^{\mathcal{O}(s \log (N/s))}$ as input.  Note that this is only possible due to the non-linearity of $f$ herein.  In, e.g., the setting of classical compressive sensing theory where $f$ must be linear it is known that such good performance is impossible \cite[Section 5]{cohen2009compressed}.

The remainder of the paper is organized as follows.  In Section~\ref{sec:Prelims} we review notation and introduce some definitions. Next, in Section~\ref{sec:ExtensionRes} we provide the proof for Theorem \ref{theo: main} and Corollary \ref{cor:main}. Finally, in Section~\ref{sec:Numerics} we conclude by demonstrating that terminal embeddings allow for more accurate compressive nearest neighbor classification than standard linear embeddings in practice.

\section{Notation and Preliminaries}
\label{sec:Prelims}

In this paper we exclusively work with the Euclidean norm in $\mathbbm{R}^N$, which we denote $\|\cdot\|_2$. The {\bf closure} of a set $X\subseteq \mathbbm{R}^N$ is denoted by $\overline{X}$, and its {\bf convex hull} by $\textrm{conv}(X)$. We will denote the cardinality of a finite set $X$ by $|X|$. The {\bf radius} and {\bf diameter} of $X$ are
\[{\rm rad}(X) := \sup_{{\bf x} \in X} \| {\bf x} \|_2 \quad \mbox{and} \quad {\rm diam}(X) := \sup_{{\bf x}, {\bf y} \in X} \| {\bf x} - {\bf y} \|_2,\]
respectively. Given  $\delta>0$, a {\bf $\delta$-cover} of $X$ will be a subset $S \subseteq X$ such that the following holds:
\[\forall \, {\bf x} \in X ~\exists \, {\bf y} \in S ~{\rm such~that~} \| {\bf x} - {\bf y} \|_2 \leq \delta.\]
Given $\varepsilon>0$, we say that a matrix $\Phi \in \mathbbm{R}^{m \times N}$ is an  \textbf{$\varepsilon$-JL embedding} of a set $X \subseteq \mathbbm{R}^N$ into $\mathbbm{R}^m$ if 
\[ (1-\varepsilon)\|\x-\y\|_2 \leq \|\Phi\x-\Phi\y\|_2 \leq (1+\varepsilon)\|\x-\y\|_2 \quad \forall\, \x,\y \in X.\]
  Here we will be working with random matrices which will embed sets $X$ of bounded size (measured with respect to, e.g., Gaussian Width \cite{vershynin2018high-dimensional}) with high probability. Such matrix distributions are often called {\bf oblivious} and discussed as randomized embeddings in the absence of any specific set $X$ since their embedding quality can be determined independently of any properties of a given set $X$ beyond its size.  In particular, the class of random matrices having independent, isotropic, and sub-Gaussian rows will receive special attention below. A random vector $Y \in \mathbbm{R}^N$ is said to be \textbf{isotropic} if $\mathbb{E} YY^T=I_N$, where $I_N$ denotes the identity matrix in $\mathbbm{R}^N$. This condition is the high-dimensional equivalent of a random variable having zero mean and unit variance. A random variable $Y$ is called \textbf{sub-Gaussian} if there is a constant $K>0$ so that 
  \[\mathbb{P} \{ |Y|\geq t\} \leq 2e^{-t^2/K} \quad \forall \, t\geq 0.\] 
  If $Y\in \mathbbm{R}^N$ is a random vector, we call it sub-Gaussian if its one-dimensional marginals are sub-Gaussian random variables.
\section{The Main Bi-Lipschitz Extension Results and Their Proofs}
\label{sec:ExtensionRes}

In this section we present the proof of Theorem \ref{theo: main} and obtain Corollary \ref{cor:main} as a consequence. The strategy to construct terminal embeddings will be the following: First, we find a convenient Johnson-Lindenstrauss embedding $\Phi \colon X \subseteq \mathbbm{R}^N \longrightarrow \mathbbm{R}^m$ with small distortion. Next, we will consider $f \colon \mathbbm{R}^N \longrightarrow \mathbbm{R}^{m+1}$ to be an outer extension of $\Phi$ given by
\begin{equation}\label{eq: formula terminal}
	 f(\y)=\left (\Phi\y_{\overline{X}} + \y', \sqrt{\|\y-\y_{\overline{X}}\|_2^2 - \|\y'\|_2^2}\right ) \quad \forall \, \y \in \mathbbm{R}^N,
\end{equation}
where $\y_{\overline{X}}$ is a point in $\overline{X}$ with minimal distance from $\y$, that is, $\y_{\overline{X}}= \text{argmin}_{\x \in \overline{X}} \; \| \y-\x\|_2$, and $\y' = g(\y)$ is a point in $\mathbbm{R}^m$ satisfying additional geometric properties that guarantee that $f$ is a terminal embedding. Here, the extra final coordinate is used to make up for error introduced by $\y' \in \mathbbm{R}^m$.  We begin the proof in the next subsection by considering the matrices $\Phi$ whose outer extensions will eventually yield our terminal embeddings.

\subsection{Sub-Gaussian Matrices and $\varepsilon$-Convex Hull Distortion for Infinite Sets}

The first step towards proving Theorem \ref{theo: main} is to find a linear JL embedding $\Phi$ of $X$ with small distortion that will later be extended to produce a terminal embedding of the form (\ref{eq: formula terminal}). Toward that end we will use the following slightly stronger property:  Let $S \subseteq \mathbb{S}^{N-1}$ be a subset of the unit sphere in $\mathbbm{R}^N$, and $\varepsilon \in (0,1)$. A matrix $\Phi \in \mathbbm{R}^{m\times N}$ is said to provide \textbf{$\varepsilon$-convex hull distortion} for $S$ if
\begin{equation}\label{eq: convex hull distortion} 
	|\|\Phi\x\|_2-\|\x\|_2|<\varepsilon \quad \forall \, \x \in \operatorname{conv}(S).\footnote{Note that any matrix $\Phi$ that satisfies this condition for $S = S_{X}$ will also be an $\varepsilon$-JL embedding of $X$.}
\end{equation}

This family of embeddings was used in \cite{narayanan2019optimal} to construct terminal embeddings that achieve optimal dimensionality reduction for finite sets. In this section, we use results from \cite{vershynin2018high-dimensional} to study the existence of convex hull distortion embeddings for infinite sets. Our main tool will be the following result (see also \cite[Theorem 4]{iwen2024on}).

\begin{Theorem}[See Theorem 9.1.1 and Exercise 9.1.8 in \cite{vershynin2018high-dimensional}]
	\label{vershynin-matrix-deviation-theorem}
	Let $\Phi \in \mathbbm{R}^{m\times N}$ be a matrix whose rows are independent, isotropic, and sub-Gaussian random vectors in $\mathbbm{R}^N$. Consider a subset $X \subseteq \mathbbm{R}^N$ and let $p \in (0,1)$. Then there exists a constant $c$ depending only on the distribution of the rows of $\Phi$ such that
	\begin{align*}
		\sup_{{\bf x} \in X} \left| \| \Phi {\bf x} \|_2 - \sqrt{m} \| {\bf x} \|_2   \right| 
		\leq 
		c \left[w(X) + \sqrt{\ln(2/p)} \cdot {\rm rad}(X) \right]
	\end{align*}
	holds with probability at least $1 - p$.
\end{Theorem}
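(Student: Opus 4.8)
The plan is to recognize the claimed bound as (the tail form of) the matrix deviation inequality and to prove it by the standard two-step recipe: first show that the relevant random process has sub-Gaussian increments, and then feed this into a high-probability version of Talagrand's comparison inequality. To this end I would index a random process by $X$, setting $f(\x) := \|\Phi\x\|_2 - \sqrt{m}\,\|\x\|_2$ for $\x \in X$, and observe that $f(\zero)=0$, so that $\sup_{\x \in X} |f(\x)| = \sup_{\x \in X} |f(\x) - f(\zero)|$ is exactly the deviation of the process from its value at the anchor point $\zero$. The two ingredients needed are then (i) a sub-Gaussian increment estimate for $f$, and (ii) a comparison theorem converting that increment estimate into a tail bound on $\sup_{\x} |f(\x)|$ in terms of the Gaussian width $w(X)$.

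The crux is ingredient (i): showing there is a constant $c$ depending only on the (sub-Gaussian) distribution of the rows of $\Phi$ such that
\[ \left\| f(\x) - f(\y) \right\|_{\psi_2} \leq c\,\|\x - \y\|_2 \qquad \forall\, \x,\y \in \mathbbm{R}^N. \]
I expect this to be the main obstacle. The naive route via the triangle inequality, $|\,\|\Phi\x\|_2 - \|\Phi\y\|_2\,| \leq \|\Phi(\x-\y)\|_2$, is too lossy: the $\psi_2$-norm of $\|\Phi(\x-\y)\|_2$ is of order $\sqrt{m}\,\|\x-\y\|_2$, which destroys the centering and fails to capture the cancellation between $\|\Phi\x\|_2 - \sqrt m\|\x\|_2$ and $\|\Phi\y\|_2 - \sqrt m\|\y\|_2$. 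Instead I would proceed through the concentration of the norm: for each fixed $\v \in \mathbbm{R}^N$ the coordinates $\langle A_i, \v\rangle$ of $\Phi \v$ are independent, mean-zero, sub-Gaussian with variance $\|\v\|_2^2$ (by isotropy) and $\psi_2$-norm $\lesssim K\|\v\|_2$, whence $\big\|\,\|\Phi \v\|_2 - \sqrt m \|\v\|_2\,\big\|_{\psi_2} \lesssim K^2 \|\v\|_2$. The delicate part is upgrading this single-vector statement to an increment bound at the correct scale $\|\x-\y\|_2$; this is done by first reducing to unit vectors via homogeneity and then handling the regimes separately, exploiting the cancellation that the triangle inequality misses. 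This is where the factor $K^2$ and the constant $c$ are produced.

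With the increment bound in hand, ingredient (ii) is routine generic chaining. I would invoke the high-probability form of Talagrand's comparison inequality for processes with sub-Gaussian increments \cite{vershynin2018high-dimensional}: anchoring at $\zero$ and using that distances $\|\x - \zero\|_2 = \|\x\|_2$ are bounded by ${\rm rad}(X)$, it yields, for every $u \geq 0$,
\[ \sup_{\x \in X} |f(\x)| \;\leq\; C\,c\left[\, w(X) + u\cdot {\rm rad}(X) \,\right] \]
with probability at least $1 - 2e^{-u^2}$. Choosing $u = \sqrt{\ln(2/p)}$ makes the failure probability at most $p$ and matches the $\sqrt{\ln(2/p)}\cdot{\rm rad}(X)$ term in the statement; absorbing all absolute constants into a single $c$ depending only on the row distribution then gives exactly the asserted inequality. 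The only points requiring care beyond the increment lemma are that the Gaussian width appearing in the comparison inequality coincides with the $w(X)$ defined here (a standard translation-invariance remark, since $w$ is unchanged by recentering the index set) and that anchoring at $\zero$ rather than at a generic point lets ${\rm rad}(X)$ replace ${\rm diam}(X)$ in the deviation term.
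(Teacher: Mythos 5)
Your proposal is correct and follows exactly the argument behind the cited result: the paper itself offers no proof of this theorem, quoting it directly from Vershynin (Theorem 9.1.1 and Exercise 9.1.8), and the proof there is precisely your two-step recipe of establishing sub-Gaussian increments for the process $f(\x) = \|\Phi\x\|_2 - \sqrt{m}\|\x\|_2$ (the $K^2$ increment bound, which is indeed the hard step) and then applying the tail form of Talagrand's comparison inequality anchored at $\zero$, with $u = \sqrt{\ln(2/p)}$ converting the $1-2e^{-u^2}$ guarantee into the stated $1-p$. No gaps; your treatment of the radius-versus-diameter point and the choice of $u$ match the source.
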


Observe that random matrices satisfying the assumptions of Theorem \ref{vershynin-matrix-deviation-theorem} include those with independent rows following a multivariate standard normal distribution, that is, matrices whose rows are of the form ${\bf g}=(g_1,\ldots,g_N)$ with the $g_i$ being independent standard normal random variables. The main result of this section is a direct consequence of Theorem~\ref{vershynin-matrix-deviation-theorem} together with standard results concerning Gaussian widths \cite[Proposition 7.5.2]{vershynin2018high-dimensional}.

\begin{Corollary}
	\label{coro:subgaussiandistorion}
	Let $\Phi \in \mathbbm{R}^{m\times N}$ be a matrix whose rows are independent, isotropic, and sub-Gaussian random vectors in $\mathbbm{R}^N$. Consider a subset $X \subseteq \mathbbm{R}^N$ and let $p \in (0,1)$. Given $\varepsilon \in (0,1)$, suppose that
	$$m \geq \frac{c'}{\varepsilon^2} \left( w \left(S_{X} \right) + \sqrt{\ln(2/p)} \right)^2,$$
	where $c'$ is a constant depending only on the distribution of the rows of $\Phi$. Then, with probability at least $1 - p$ the random matrix $\frac{1}{\sqrt{m}} \Phi$ provides $\varepsilon$-convex hull distortion for $S_{X}$.
\end{Corollary}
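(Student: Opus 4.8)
The plan is to apply Theorem~\ref{vershynin-matrix-deviation-theorem} not to $S_{X}$ directly but to its convex hull $\operatorname{conv}(S_{X})$, since by definition providing $\varepsilon$-convex hull distortion for $S_{X}$ amounts to controlling $\left|\tfrac{1}{\sqrt m}\|\Phi\x\|_2 - \|\x\|_2\right|$ uniformly over all $\x \in \operatorname{conv}(S_{X})$. First I would invoke Theorem~\ref{vershynin-matrix-deviation-theorem} with the set $\operatorname{conv}(S_{X})$ in place of $X$, obtaining, with probability at least $1-p$,
\[
\sup_{\x \in \operatorname{conv}(S_{X})} \left| \|\Phi\x\|_2 - \sqrt m\,\|\x\|_2 \right| \;\le\; c\left[ w\!\left(\operatorname{conv}(S_{X})\right) + \sqrt{\ln(2/p)}\cdot{\rm rad}\!\left(\operatorname{conv}(S_{X})\right)\right].
\]

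Next I would simplify the right-hand side using two standard facts. The Gaussian width is unchanged under passage to the convex hull, so $w(\operatorname{conv}(S_{X})) = w(S_{X})$; this is Proposition~7.5.2 in \cite{vershynin2018high-dimensional}. Moreover, because $S_{X} \subseteq \mathbb{S}^{N-1}$ consists of unit vectors, any convex combination $\sum_i \lambda_i \s_i$ of its elements satisfies $\|\sum_i \lambda_i \s_i\|_2 \le \sum_i \lambda_i \|\s_i\|_2 = 1$, whence ${\rm rad}(\operatorname{conv}(S_{X})) \le 1$. Substituting these two bounds and dividing through by $\sqrt m$ yields
\[
\sup_{\x \in \operatorname{conv}(S_{X})} \left| \tfrac{1}{\sqrt m}\|\Phi\x\|_2 - \|\x\|_2 \right| \;\le\; \frac{c}{\sqrt m}\left[ w(S_{X}) + \sqrt{\ln(2/p)}\right].
\]

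Finally I would check that the hypothesis on $m$ forces this right-hand side strictly below $\varepsilon$. Taking the constant $c'$ in the statement to be slightly larger than $c^2$, the assumption $m \ge \tfrac{c'}{\varepsilon^2}\left(w(S_{X}) + \sqrt{\ln(2/p)}\right)^2$ is precisely what is needed to guarantee $\tfrac{c}{\sqrt m}\left(w(S_{X})+\sqrt{\ln(2/p)}\right) < \varepsilon$. On the event of probability at least $1-p$ this shows that $\tfrac{1}{\sqrt m}\Phi$ provides $\varepsilon$-convex hull distortion for $S_{X}$, as claimed.

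There is essentially no hard step here: the entire content lies in recognizing that $\varepsilon$-convex hull distortion is a uniform norm-preservation statement over $\operatorname{conv}(S_{X})$, so that the matrix deviation inequality of Theorem~\ref{vershynin-matrix-deviation-theorem} applies verbatim once one records the two elementary facts $w(\operatorname{conv}(S_{X})) = w(S_{X})$ and ${\rm rad}(\operatorname{conv}(S_{X})) \le 1$. The only mild subtlety is bookkeeping the absolute constant (and the strict versus non-strict inequality), which I would absorb by taking $c'$ a touch larger than $c^2$.
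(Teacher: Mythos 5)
Your proposal is correct and matches the paper's proof essentially verbatim: both apply Theorem~\ref{vershynin-matrix-deviation-theorem} to $\operatorname{conv}(S_X)$, invoke $w(\operatorname{conv}(S_X)) = w(S_X)$ together with ${\rm rad}(\operatorname{conv}(S_X)) \leq 1$, and conclude with $c'$ on the order of $c^2$. Your extra care about taking $c'$ slightly larger than $c^2$ to secure the strict inequality in the definition of $\varepsilon$-convex hull distortion is a fine (if pedantic) refinement of the paper's argument, which simply sets $c' = c^2$.
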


\begin{proof}
	Consider $S = \textrm{conv}\left(S_{X} \right)$. Note that $w \left(\textrm{conv}\left(S_{X}\right) \right) = w \left(S_{X} \right)$ \cite[Proposition 7.5.2]{vershynin2018high-dimensional}, and that ${\rm rad}(S)\leq 1$. Therefore, the result follows with $c'=c^2$ from applying Theorem~\ref{vershynin-matrix-deviation-theorem} to $S$, where $c$ is the constant that appears in the statement of Theorem \ref{vershynin-matrix-deviation-theorem}.
\end{proof}

\subsection{Outer Extensions of $\varepsilon$-Convex Hull Embeddings}

Let $X$ be a subset of $\mathbbm{R}^N$ and let $\Phi$ provide $\varepsilon$-convex hull distortion for $S_X$. Consider $f$ an outer extension of $\Phi$ of the form (\ref{eq: formula terminal}). The final step to prove Theorem \ref{theo: main} is to find a mapping $\y \in \mathbbm{R}^N \rightarrow \y' \in \mathbbm{R}^m$ that makes of this outer extension a terminal embedding. For a fixed $\y \in \mathbbm{R}^N \setminus \overline{X}$ and $\x \in X\setminus \{\y_{\overline{X}}\}$, let $\alpha_\x$ denote the angle between the vectors $\y-\y_{\overline{X}}$ and $\x-\y_{\overline{X}}$. Intuitively, $\y' \in \mathbbm{R}^m$ is a point chosen so that $\alpha_\x$ is very close to the angle between $\y'$ and $\Phi(\x-\y_{\overline{X}})$, for all $\x \in X\setminus\{\y_{\overline{X}}\}$. This property allows us to effectively extend $\Phi$ to $\y$ while preserving angles, which leads to preservation of the norm. The existence of a point satisfying such a property is far from trivial, and it was proved in Lemma 3.1 in \cite{mahabadi2018nonlinear} for finite sets. The following lemma is an optimized version of their result from \cite{narayanan2019optimal}. 

\begin{Lemma}[Lemma 3.6 in \cite{narayanan2019optimal}]\label{lem: narayan u'}
	Let $\x_1,\ldots,\x_n$ be nonzero points in $\mathbbm{R}^N$. Suppose that $\Phi \in \mathbbm{R}^{m \times N}$ provides $\varepsilon$-convex hull distortion for $V=\left \{\pm \frac{\x_i}{\|\x_i\|_2}\colon i=1,\ldots,n\right \}$. Then, for any $\y \in \mathbbm{R}^N$ there is $\y' \in \mathbbm{R}^m$ such that $\|\y'\|_2\leq \|\y\|_2$ and $|\langle \y',\Phi \x_i\rangle - \langle \y,\x_i\rangle|\leq \varepsilon \|\y\|_2\|\x_i\|_2$ for every $\x_i$.
\end{Lemma}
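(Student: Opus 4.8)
The plan is to recast the existence of $\y'$ as the assertion that the value of a certain min--max optimization problem does not exceed $\varepsilon$, and then to certify this value via a minimax theorem together with the $\varepsilon$-convex hull distortion hypothesis. First I would normalize: if $\y=\zero$ take $\y'=\zero$, and otherwise observe that both the constraint $\|\y'\|_2\le\|\y\|_2$ and the target estimate are homogeneous under the rescalings $\y\mapsto\y/\|\y\|_2$ and $\x_i\mapsto\x_i/\|\x_i\|_2$. Hence I may assume $\|\y\|_2=1$ and $\x_i\in\mathbb{S}^{N-1}$, so that $V=\{\pm\x_i\}$ and the goal reduces to finding $\u$ with $\|\u\|_2\le 1$ and $\max_i|\langle\u,\Phi\x_i\rangle-\langle\y,\x_i\rangle|\le\varepsilon$. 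Since the closed unit ball is compact and the objective continuous, it suffices to show
\[
  \min_{\|\u\|_2\le 1}\ \max_i\ \bigl|\langle\u,\Phi\x_i\rangle-\langle\y,\x_i\rangle\bigr|\ \le\ \varepsilon,
\]
after which the minimizer $\y':=\u^\star$ has all the required properties.

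Next I would linearize the absolute value and pass to a convex hull. Writing $|\langle\u,\Phi\x_i\rangle-\langle\y,\x_i\rangle|=\max_{s\in\{\pm1\}}s\bigl(\langle\u,\Phi\x_i\rangle-\langle\y,\x_i\rangle\bigr)$ and absorbing the sign into $\x_i$, the inner maximum over $i$ becomes $\max_{\v\in V}\bigl(\langle\u,\Phi\v\rangle-\langle\y,\v\rangle\bigr)$, which equals $\max_{\w\in\operatorname{conv}(V)}\langle\Phi^{\top}\u-\y,\w\rangle$ because a linear functional attains its maximum over a polytope at an extreme point. The quantity to bound is therefore
\[
  \min_{\|\u\|_2\le 1}\ \max_{\w\in\operatorname{conv}(V)}\ \langle\Phi^{\top}\u-\y,\w\rangle .
\]

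Now comes the key step. The objective is bilinear in $(\u,\w)$, and both $\{\u:\|\u\|_2\le 1\}$ and $\operatorname{conv}(V)$ are convex and compact, so a minimax theorem (von Neumann or Sion) lets me swap the order to $\max_{\w\in\operatorname{conv}(V)}\min_{\|\u\|_2\le 1}\langle\Phi^{\top}\u-\y,\w\rangle$. For fixed $\w$ the inner minimum is explicit, $\min_{\|\u\|_2\le 1}\langle\u,\Phi\w\rangle=-\|\Phi\w\|_2$, so the whole expression collapses to $\max_{\w\in\operatorname{conv}(V)}\bigl(-\|\Phi\w\|_2-\langle\y,\w\rangle\bigr)$. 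Finally I would bound this using the two hypotheses: Cauchy--Schwarz with $\|\y\|_2=1$ gives $-\langle\y,\w\rangle\le\|\w\|_2$, while $\varepsilon$-convex hull distortion for $V$ gives $\|\Phi\w\|_2>\|\w\|_2-\varepsilon$ for every $\w\in\operatorname{conv}(V)$; combining these yields $-\|\Phi\w\|_2-\langle\y,\w\rangle<\varepsilon$ pointwise, hence the maximum is at most $\varepsilon$, and the argument is complete.

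I expect the main obstacle to be the justification of the minimax exchange --- verifying that the compactness and convexity hypotheses hold and that the bilinear objective is continuous, so that von Neumann's or Sion's theorem genuinely applies --- and, immediately preceding it, the reformulation that converts the $n$ two-sided constraints into a single linear functional maximized over $\operatorname{conv}(V)$. It is precisely this passage to the convex hull that makes the distortion hypothesis (which is stated on $\operatorname{conv}(V)$) usable, so getting that reduction exactly right is essential; the subsequent dual-norm evaluation and the Cauchy--Schwarz/distortion estimate are then routine.
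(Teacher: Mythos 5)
Your proof is correct, and it follows exactly the route the paper points to: the paper does not reprove this lemma (it quotes it as Lemma 3.6 of \cite{narayanan2019optimal}) but remarks that the von Neumann minimax theorem is the crucial ingredient, which is precisely your key step of swapping $\min_{\|\u\|_2\le 1}$ and $\max_{\w\in\operatorname{conv}(V)}$, evaluating the inner minimum as $-\|\Phi\w\|_2$, and closing with Cauchy--Schwarz plus the convex hull distortion hypothesis. Your normalization, the reduction of the two-sided constraints to a single linear functional over $\operatorname{conv}(V)$, and the final estimate are all sound, so this is a faithful reconstruction of the cited argument.
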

The von Neumann minimax theorem \cite{neumann1928zur} plays a crucial role in the proof of the previous result. The following result is an extended version of Lemma \ref{lem: narayan u'} that provides existence of the point $\y'$ for infinite sets.

\begin{Lemma}\label{lem: u'}
    Let $X$ be a subset of $\mathbbm{R}^N$. For $\y \in \mathbbm{R}^N$, let $\y_{\overline{X}}= \text{argmin}_{\x \in \overline{X}} \; \| \y-\x\|_2$, and define $W_\y=\left \{\pm \frac{\x-\y_{\overline{X}}}{\|\x-\y_{\overline{X}}\|_2} \colon \x \in X \setminus\{\y_{\overline{X}}\}\right \}\subseteq S_X$. Suppose that $\Phi \in \mathbbm{R}^{m\times N}$ provides $\frac{\varepsilon}{6}$-convex hull distortion for $W_\y$. Then, there is $\y'\in \mathbbm{R}^m$ such that
	\begin{enumerate}
		\item $\|\y'\|_2\leq \|\y-\y_{\overline{X}}\|_2$;
		\item $|\langle \y',\Phi(\x-\y_{\overline{X}})\rangle - \langle \y-\y_{\overline{X}}, \x-\y_{\overline{X}}\rangle | \leq \varepsilon \|\y-\y_{\overline{X}}\|_2\|\x-\y_{\overline{X}}\|_2$ for all $\x \in X$.
	\end{enumerate}
\end{Lemma}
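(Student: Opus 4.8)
The plan is to reduce the infinite set $X$ to a finite one via a covering argument and then invoke the finite-set result, Lemma~\ref{lem: narayan u'}, exploiting the slack between the hypothesis ($\frac{\varepsilon}{6}$-convex hull distortion) and the desired conclusion (error $\varepsilon$) to absorb the covering error. Throughout I work in the translated picture with $\y_{\overline{X}}$ as the origin, writing $\tilde{\y} := \y - \y_{\overline{X}}$ and $\tilde{\x} := \x - \y_{\overline{X}}$. If $\y \in \overline{X}$ then $\tilde{\y} = \zero$ and condition (1) forces $\y' = \zero$, which trivially satisfies (2); likewise if $X \subseteq \{\y_{\overline{X}}\}$ then $W_\y$ is empty and (2) is vacuous. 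So I may assume $\y \notin \overline{X}$ and $W_\y \neq \emptyset$, and after dividing (2) through by $\|\tilde{\x}\|_2$ it suffices to produce a single $\y'$ with $\|\y'\|_2 \leq \|\tilde{\y}\|_2$ satisfying $|\langle \y', \Phi u\rangle - \langle \tilde{\y}, u\rangle| \leq \varepsilon \|\tilde{\y}\|_2$ for every unit secant $u = \tilde{\x}/\|\tilde{\x}\|_2 \in W_\y$.

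First I would discretize. Since $W_\y \subseteq \mathbb{S}^{N-1}$ is totally bounded, for a parameter $\delta>0$ to be fixed I can choose a finite $\delta$-cover $\{v_1,\dots,v_n\} \subseteq W_\y$ of $W_\y$ (e.g. a maximal $\delta$-separated subset). Each $v_j$ equals $\pm\tilde{\x}_j/\|\tilde{\x}_j\|_2$ for some $\x_j \in X \setminus \{\y_{\overline{X}}\}$. Because $\operatorname{conv}(\{\pm v_j\}) \subseteq \operatorname{conv}(W_\y)$, the matrix $\Phi$ provides $\frac{\varepsilon}{6}$-convex hull distortion for the finite set $\{\pm\tilde{\x}_j/\|\tilde{\x}_j\|_2\}$, so Lemma~\ref{lem: narayan u'} applied to $\tilde{\x}_1,\dots,\tilde{\x}_n$ (with distortion $\frac{\varepsilon}{6}$ and the vector $\tilde{\y}$ in the role of the lemma's free point) yields a point $\y' \in \mathbbm{R}^m$ with $\|\y'\|_2 \leq \|\tilde{\y}\|_2$ (condition (1)) and, after normalizing, $|\langle \y', \Phi v_j\rangle - \langle \tilde{\y}, v_j\rangle| \leq \frac{\varepsilon}{6}\|\tilde{\y}\|_2$ for every cover point $v_j$ (the sign is harmless since negating preserves the absolute-value bound).

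Next I would upgrade this bound from the cover points to all of $W_\y$. Given an arbitrary $u \in W_\y$, pick $v_j$ with $\|u - v_j\|_2 \leq \delta$ and split $\langle \y', \Phi u\rangle - \langle \tilde{\y}, u\rangle = (\langle \y', \Phi v_j\rangle - \langle \tilde{\y}, v_j\rangle) + \langle \y', \Phi(u - v_j)\rangle - \langle \tilde{\y}, u - v_j\rangle$. The first bracket is at most $\frac{\varepsilon}{6}\|\tilde{\y}\|_2$ and the last term is at most $\|\tilde{\y}\|_2 \delta$. The crucial middle term is bounded using $\|\y'\|_2 \leq \|\tilde{\y}\|_2$ together with the observation that, since $W_\y = -W_\y$, the vector $\tfrac12(u - v_j)$ lies in $\operatorname{conv}(W_\y)$; hence $\frac{\varepsilon}{6}$-convex hull distortion gives $\|\Phi(u - v_j)\|_2 \leq \|u - v_j\|_2 + \frac{\varepsilon}{3} \leq \delta + \frac{\varepsilon}{3}$. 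Collecting the three estimates gives total error at most $\|\tilde{\y}\|_2(\frac{\varepsilon}{6} + \frac{\varepsilon}{3} + 2\delta) = \|\tilde{\y}\|_2(\frac{\varepsilon}{2} + 2\delta)$, so choosing $\delta \leq \varepsilon/4$ yields exactly the required bound $\varepsilon\|\tilde{\y}\|_2$. This also explains the factor of $6$ in the hypothesis: it is what leaves room for both the finite-set error and the two discretization terms.

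The step I expect to be the main obstacle is precisely the control of the middle term $\langle \y', \Phi(u - v_j)\rangle$: a plain $\varepsilon$-JL guarantee on $X$ would not suffice here, because I need $\Phi$ to be non-expansive (up to additive error) on the \emph{difference} of two unit secants, which is a genuinely new direction not of the form $\x - \y$ with $\x,\y \in X$. This is exactly what the stronger convex hull distortion property delivers via the symmetry $W_\y = -W_\y$, and it is why I take the cover points from $W_\y$ itself (rather than from $\overline{W_\y}$) so that Lemma~\ref{lem: narayan u'} applies verbatim to honest secant directions. The remaining bookkeeping — the trivial cases, the normalization by $\|\tilde{\x}\|_2$, and tracking signs — is routine.
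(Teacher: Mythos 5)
Your proposal is correct and follows essentially the same route as the paper's own proof: a finite $\varepsilon/4$-cover of $W_\y$ drawn from $W_\y$ itself, an application of Lemma~\ref{lem: narayan u'} with parameter $\varepsilon/6$ to the cover points, and a three-term triangle-inequality split in which the cross term $\langle \y', \Phi(u-v_j)\rangle$ is controlled by writing $\tfrac{1}{2}(u-v_j) \in \operatorname{conv}(W_\y)$ via the symmetry $W_\y = -W_\y$ and invoking the convex hull distortion, yielding the identical budget $\tfrac{\varepsilon}{6} + \tfrac{\varepsilon}{3} + 2\cdot\tfrac{\varepsilon}{4} = \varepsilon$. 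The only differences are cosmetic (explicit handling of the vacuous case $W_\y = \emptyset$ and the sign bookkeeping in applying the finite-set lemma), so no further changes are needed.
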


\begin{proof}
	If $\y \in \overline{X}$, then one can take $\y'=0$. Thus, we may assume that $\y \in \mathbbm{R}^N \setminus \overline{X}$.
	If $X$ is finite the result follows from Lemma \ref{lem: narayan u'}. For infinite sets, let $\mathcal{C}\subseteq W_\y$ be a finite $\frac{\varepsilon}{4}$-cover of $W_\y$. Notice that the existence of such a covering is guaranteed since $\overline{W_\y}$ is compact. Then, Lemma \ref{lem: narayan u'} provides $\y' \in \mathbbm{R}^m$ with $\|\y'\|_2\leq \|\y-\y_{\overline{X}}\|_2$ such that
	\[ |\langle \y',\Phi\w\rangle - \langle \y-\y_{\overline{X}}, \w \rangle | \leq \frac{\varepsilon}{6} \|\y-\y_{\overline{X}}\|_2 \quad \forall \, \w \in \mathcal{C}.\]
	For any $\x \in X$, write $\tilde{\x}=\frac{\x-\y_{\overline{X}}}{\|\x-\y_{\overline{X}}\|_2}$ and let $\w \in \mathcal{C}$ so that $\|\tilde{\x}-\w\|_2\leq\frac{\varepsilon}{4}$. Observe that
	\begin{align*}
		\left| \langle \y', \Phi\tilde{\x} \rangle - \langle \y-\y_{\overline{X}}, \tilde{\x} \rangle \right| &\leq \left| \langle \y', \Phi(\tilde{\x}-\w) \rangle \right| + \left| \langle \y', \Phi\w \rangle - \langle \y-\y_{\overline{X}}, \w  \rangle \right| + \left| \langle \y-\y_{\overline{X}}, \tilde{\x}-\w \rangle \right|\\
		&\leq \|\y-\y_{\overline{X}}\|_2 \left ( \|\Phi ( \tilde{\x}-\w)\|_2 + \frac{\varepsilon}{6} + \|\tilde{\x}-\w\|_2\right ) \\
		&= \|\y-\y_{\overline{X}}\|_2\left (2\left \|\Phi\left (\frac{\tilde{\x}-\w}{2}\right )\right \|_2  + \frac{\varepsilon}{6} + \|\tilde{\x}-\w\|_2 \right)\\
		&\leq \|\y-\y_{\overline{X}}\|_2 \left (\frac{\varepsilon}{2} + 2\|\tilde{\x}-\w\|_2\right )\leq \varepsilon \|\y-\y_{\overline{X}}\|_2,
	\end{align*}
	where the first inequality follows from the triangle inequality, the second one from Cauchy–Schwarz inequality and $\| \y' \|_2\leq \| \y-\y_{\overline{X}}\|_2$, and the forth one from the $\frac{\varepsilon}{6}$-convex hull distortion condition of $\Phi$.
\end{proof}

\subsection{Proof of Main Results}

Let $X$ be a subset of $\mathbbm{R}^N$ and suppose that $\Phi \in \mathbbm{R}^{m\times N}$ provides convex hull distortion for $S_X$. Consider a map $g\colon \mathbbm{R}^N \longrightarrow \mathbbm{R}^m$ that assigns to every $\y \in \mathbbm{R}^N$ a point $g(\y)=\y'$ satisfying conditions (1) and (2) in Lemma \ref{lem: u'}. We are now ready to prove that if $f\colon \mathbbm{R}^N\longrightarrow \mathbbm{R}^{m+1}$ is an outer extension of $\Phi$ defined as in (\ref{eq: formula terminal}), then $f$ is an embedding satisfying the terminal condition (\ref{eq: terminal embedding}).

\begin{proof}[Proof of Theorem \ref{theo: main}]
Given $\y \in \R^N$ let $\y_{\overline{X}} \in \overline{X}$ satisfying $\left\| \y - \y_{\overline{X}} \right\|_2 = \min_{\x \in \overline{X}} \left\| \y - \x \right\|_2$. Let $\Phi \in \mathbbm{R}^{m\times N}$ be an $\frac{\varepsilon}{60}$-convex hull distortion for $S_X$. In view of Lemma \ref{lem: u'}, there is a map $g\colon \mathbbm{R}^N \longrightarrow \mathbbm{R}^m$ satisfying $\|g(\y)\|_2\leq \|\y-\y_{\overline{X}}\|_2$ and
\begin{equation}\label{eq: g condition}
	 |\langle g(\y),\Phi(\x-\y_{\overline{X}})\rangle - \langle \y-\y_{\overline{X}}, \x-\y_{\overline{X}}\rangle | \leq \frac{\varepsilon}{10} \|\y-\y_{\overline{X}}\|_2\|\x-\y_{\overline{X}}\|_2 \quad \forall \, \x \in X \quad \forall \, \y \in \mathbbm{R}^N.
\end{equation}
We define $f \colon \mathbbm{R}^N \longrightarrow \mathbbm{R}^{m+1}$ by
	\[
	f(\y) := \begin{cases}
	\left( \Phi \y, 0 \right) & \text{if } \y \in \overline{X}; \\
	\left( \Phi \y_{\overline{X}} + g(\y ), \sqrt{\left\| \y - \y_{\overline{X}} \right\|_2^2 - \left\| g(\y) \right\|_2^2} \right) & \text{if } \y \notin \overline{X}.
	\end{cases}
 	\]
Fix $\x \in X$. Observe that if $\y \in \overline{X}$ then $\| f(\x) - f(\y) \|_2 = \| \Phi(\x - \y) \|_2$. Since $\Phi$ is an $\frac{\varepsilon}{60}$-convex hull distortion for $S_X$, we deduce that
\[ \left | \|\Phi (\x-\y)\|_2 - \|\x - \y \|_2 \right | \leq \frac{\varepsilon}{60} \|\x-\y\|_2.\] 
Thus, it suffices to consider a fixed $\y \notin \overline{X}$.  In that case we have
\begin{align}
    \| f(\x) - f(\y) \|_2^2 &= \| \Phi(\x - \y_{\overline{X}}) - g(\y)\|_2^2 + \left\| \y - \y_{\overline{X}} \right\|_2^2 - \left\| g(\y) \right\|_2^2 \nonumber \\
    &= \left\| \y - \y_{\overline{X}} \right\|_2^2 + \| \Phi(\x - \y_{\overline{X}}) \|_2^2 - 2  \langle g(\y), \Phi(\x - \y_{\overline{X}}) \rangle 
    \label{equ:finalextendeqproof1}
\end{align}
by the polarization identity and parallelogram law. Similarly we have that  
\begin{equation}
\label{equ:finalextendeqproof2}
    \| \x - \y \|_2^2 = \left\| \left(\x - \y_{\overline{X}} \right) - \left(\y - \y_{\overline{X}} \right) \right\|_2^2 =\| \y - \y_{\overline{X}} \|_2^2 + \| \x - \y_{\overline{X}} \|_2^2 - 2 \langle \y - \y_{\overline{X}}, \x - \y_{\overline{X}} \rangle.
\end{equation}
Subtracting \eqref{equ:finalextendeqproof2} from \eqref{equ:finalextendeqproof1} we can now see that
\[\left| \| f(\x) - f(\y) \|_2^2 - \| \x - \y \|_2^2 \right| \leq \left| \| \Phi(\x - \y_{\overline{X}}) \|_2^2 - \| \x - \y_{\overline{X}} \|_2^2 \right| +
    2\left|
     \langle g(\y), \Phi(\x - \y_{\overline{X}}) \rangle  - \langle \y - \y_{\overline{X}}, \x - \y_{\overline{X}} \rangle \right|.
\]
Observe that
\begin{align*} 
	\left |\| \Phi(\x - \y_{\overline{X}}) \|_2^2 - \| \x - \y_{\overline{X}} \|_2^2\right | &= \left |\| \Phi(\x - \y_{\overline{X}}) \|_2 - \| \x - \y_{\overline{X}} \|_2\right | \cdot \left |\| \Phi(\x - \y_{\overline{X}}) \|_2 + \| \x - \y_{\overline{X}} \|_2\right | \\
	&\leq \frac{\varepsilon}{60}\|\x-\y_{\overline{X}}\|_2 \cdot  \left |2\|\x - \y_{\overline{X}} \|_2 + \| \x - \y_{\overline{X}} \|_2\right | \leq \frac{\varepsilon}{20} \|\x-\y_{\overline{X}}\|_2^2.
\end{align*}
Therefore, in view of (\ref{eq: g condition}) we obtain that
\[
	\left| \| f(\x) - f(\y) \|_2^2 - \| \x - \y \|_2^2 \right| \leq \frac{\varepsilon}{20} \|\x-\y_{\overline{X}}\|_2^2 + \frac{\varepsilon}{5} \|\y-\y_{\overline{X}}\|_2\|\x-\y_{\overline{X}}\|_2 \leq \frac{\varepsilon}{4} \|\x-\y_{\overline{X}}\|_2^2\leq \varepsilon \|\x-\y\|_2^2.
\]
since $\| \x - \y_{\overline{X}} \|_2 \leq \| \x - \y \|_2 + \| \y - \y_{\overline{X}} \|_2 \leq 2 \| \x - \y \|_2$. The result follows from observing that
\[ \left| \| f(\x) - f(\y) \|_2 - \| \x - \y \|_2 \right| = \frac{\left| \| f(\x) - f(\y) \|_2^2 - \| \x - \y \|_2^2 \right|}{\left| \| f(\x) - f(\y) \|_2 + \| \x - \y \|_2 \right|} \leq \frac{\varepsilon\|\x-\y\|_2^2}{\|\x-\y\|_2} = \varepsilon \|\x-\y\|_2.\qedhere\]

\end{proof}

\begin{proof}[Proof of Corollary \ref{cor:main}]
	It follows from Theorem \ref{theo: main} and the bounds for the Gaussian width provided by Lemma \ref{GaussianWidthOfManifodWithBoundaryViaGunther}.
\end{proof}

\section{A Numerical Evaluation of Terminal Embeddings}
\label{sec:Numerics}

\renewcommand{\algorithmicrequire}{\textbf{Input:}}
\renewcommand{\algorithmicensure}{\textbf{Output:}}


In this section we consider several variants of the optimization approach mentioned in Subsection 3.3 of \cite{narayanan2019optimal} for implementing a terminal embedding $f: \mathbbm{R}^N \rightarrow \mathbbm{R}^{m+1}$ of a finite set $X \subseteq \mathbbm{R}^N$.  In effect, this requires us to implement a function satisfying two sets of constraints from \cite[Subsection 3.3]{narayanan2019optimal} that are analogous to the two properties that $y'\in \mathbbm{R}^m$ in Lemma \ref{lem: u'} satisfies.  See Lines 2 and 3 of Algorithm~\ref{alg:FTE} for a concrete example of one type of constrained minimization problem solved herein to accomplish this task.  

\begin{algorithm}[H]
\caption{Terminal Embedding of a Finite Set} \label{alg:FTE}
\begin{algorithmic}[1]
\Require
$\epsilon \in (0,1),~ X \subseteq \mathbbm{R}^N,~ n:=\lvert X \rvert,~ S \subseteq \mathbbm{R}^N,~ n':=\lvert S\rvert$,~$m \in \mathbbm{N}$ with $m < N$,~a random matrix with i.i.d. standard Gaussian entries, $\Pi \in \mathbbm{R}^{m \times N}$, rescaled to perform as a JL embedding matrix $\Phi := \frac{1}{\sqrt{m}} \Pi$ 
\Ensure A terminal embedding of $X$, $f \in \mathbbm{R}^N \rightarrow \mathbbm{R}^{m+1}$, evaluated on $S$
\For {${\bf y} \in S$}
\State  Compute ${\bf y}_{\overline{X}} := \text{argmin}_{{\bf x} \in X} \; \| {\bf y} - {\bf x}\|_{2}$
\State  Solve the following constrained minimization problem to compute a minimizer ${\bf y}' \in \mathbb{R}^m$
\vspace{-3mm}
\begin{align*}
\text{Minimize} \quad &h_{{\bf y},{\bf y}_{\overline{X}}}({\bf z}) := \| {\bf z}\|^2_2 + 2 \langle \Pi ({\bf y} - {\bf y}_{\overline{X}}),{\bf z} \rangle&\\
\text{subject to} \quad &\|{\bf z}\|_{2} \leq \|{\bf y} - {\bf y}_{\overline{X}}\|_{2}&\\
&\lvert \langle {\bf z}, \Phi ({\bf x} - {\bf y}_{\overline{X}}) \rangle  - \langle {\bf y} - {\bf y}_{\overline{X}}, {\bf x} - {\bf y}_{\overline{X}}\rangle\rvert  \leq  \epsilon \|{\bf y} - {\bf y}_{\overline{X}}\|_{2}\|{\bf x} - {\bf y}_{\overline{X}}\|_{2}~\forall\, {\bf x} \in X&
\end{align*}
\State Compute $f: \mathbbm{R}^N \rightarrow \mathbbm{R}^{m+1}$ at ${\bf y}$ via
\vspace{-4mm}
\begin{align*}
f({\bf y}) :=
\begin{cases}
(\Pi {\bf y},0), \quad &{\bf y} \in X\cr
(\Pi {\bf y}_{\overline{X}} + {\bf y}', \sqrt{\|{\bf y} - {\bf y}_{\overline{X}} \|_{2}^2 - \|{\bf y}'\|_{2}^2}), \quad &{\bf y} \notin X\cr
\end{cases}
\end{align*}
\EndFor
\end{algorithmic}
\end{algorithm}

 Crucially, we note that any point ${\bf y}' \in \mathbb{R}^m$ satisfying the two sets of constraints in Line 3 of Algorithm~\ref{alg:FTE} for a given ${\bf y} \in \mathbbm{R}^N$ is guaranteed to correspond to an evaluation of a valid terminal embedding of $X$ at ${\bf y}$ in Line 4. Furthermore, the choice of $\y'$ can drastically change the performance of the terminal embedding. Given this setup, several heretofore unexplored practical questions about terminal embeddings immediately present themselves.  These include:
\begin{enumerate}

    \item Repeatedly solving the optimization problem in Line 3 of Algorithm~\ref{alg:FTE} to evaluate a terminal embedding of $X$ on $S$ is certainly more computationally expensive than simply evaluating a standard linear Johnson-Lindenstrauss (JL) embedding of $X$ on $S$ instead.  How do terminal embeddings empirically compare to standard linear JL embedding matrices on real-world data in the context of, e.g., compressive classification?  When, if ever, is their additional computational expense actually justified in practice?
    
    \item Though any choice of  objective function $h_{{\bf y},{\bf y}_{\overline{X}}}$ in Line 3 of Algorithm~\ref{alg:FTE} must result in a terminal embedding $f$ of $X$ based on the available theory, some choices probably lead to better empirical performance than others.  What's a good default choice?
    
    \item How much dimensionality reduction are terminal embeddings capable of in the context of, e.g., accurate compressive classification using real-world data?   
    
\end{enumerate}

In keeping with the motivating application discussed in Subsection~\ref{sec:motivating_application} above, we will explore some preliminary answers to these three questions in the context of compressive classification based on real-world data below.  

\subsection{A Comparison Criteria:  Compressive Nearest Neighbor Classification}

Given a labelled data set $\mathcal{D} \subseteq \mathbbm{R}^N$ with label set $\mathcal{L}$, we let $Label: \mathcal{D} \rightarrow \mathcal{L}$ denote the function which assigns the correct label to each element of the data set.  To address the three questions above we will use compressive nearest neighbor classification accuracy as a primary measure of an embedding strategy's quality.  See Algorithm~\ref{Alg:Class} for a detailed description of how this accuracy can be computed for a given data set $\mathcal{D}$.

\begin{algorithm}[H]
\caption{Measuring Compressive Nearest Neighbor Classification Accuracy} \label{Alg:Class}
\begin{algorithmic}[1]
\Require $\epsilon \in (0,1)$, A labeled data set $\mathcal{D} \subseteq \mathbbm{R}^N$ split into two disjoint subsets: A training set $X \subseteq \mathcal{D}$ with $n:=\lvert X \rvert$, and a test set $S \subseteq \mathcal{D}$ with $n':=\lvert S\rvert$, such that $S \cap X = \emptyset$.  A compressive dimension $m < N$.  
\Ensure Successful Nearest Neighbor Classification Percentage for Data Embedded in $\mathbbm{R}^{m+1}$.
\State Fix $f: \mathbbm{R}^N \rightarrow \mathbbm{R}^{m+1}$, an embedding of the training data $X \subseteq \mathbbm{R}^N$ into $\mathbbm{R}^{m+1}$ satisfying
$$(1 - \epsilon) \| {\bf x} - {\bf y} \|_2 \leq \left\| f({\bf x}) - f({\bf y}) \right\|_2 \leq (1 + \epsilon) \| {\bf x} - {\bf y} \|_2$$
for all ${\bf x},{\bf y} \in X$.  [Note: this can either be a JL-embedding of $X$, or a stronger terminal embedding of $X$.]\\

\State \% {\it Embed the training data into $\mathbbm{R}^{m+1}$.}
\For{${\bf x} \in X$}
\State Compute $f({\bf x})$ using, e.g., Algorithm~\ref{alg:FTE}.
\EndFor\\

\State \% {\it Classify the test data using its embedded distance in $\mathbbm{R}^{m+1}$.}
\State $p = 0$ 
\For{${\bf y} \in S$}
\State Compute $f({\bf y})$ using, e.g., Algorithm~\ref{alg:FTE}
\State Compute ${\bf z} = \text{argmin}_{ {\bf x} \in X} \; \|f({\bf y}) -  f({\bf x})\|_{2}$
\If{$Label({\bf y}) = Label({\bf z})$}
\State $p = p + 1$
\EndIf
\EndFor\\

\State Output the Successful Classification Percentage = $\dfrac{p}{n'} \times 100\%$
\end{algorithmic}
\end{algorithm}

Note that Algorithm~\ref{Alg:Class} can be used to help us compare the quality of different embedding strategies.  For example, one can use  Algorithm~\ref{Alg:Class} to compare different choices of objective functions $h_{{\bf y},{\bf y}_{\overline{X}}}$ in Line 3 of Algorithm~\ref{alg:FTE} against one another by running Algorithm~\ref{Alg:Class} multiple times on the same training and test data sets while only varying the implementation of Algorithm~\ref{alg:FTE} each time.  This is exactly the type of approach we will use below.  Of course, before we can begin we must first decide on some labelled data sets $\mathcal{D}$ to use in our classification experiments.

\subsection{Our Choice of Training and Testing Data Sets}
\label{sec:DataSets}

Herein we consider two standard benchmark image data sets which allow for accurate uncompressed Nearest Neighbor (NN) classification.  The images in each data set can then be vectorized and embedded using, e.g., Algorithm~\ref{alg:FTE} in order to test the accuracies of compressed NN classification variants against both one another, as well as against standard uncompressed NN classification.  These benchmark data sets are as follows. 

\begin{figure}[H] 
\centering
\includegraphics[width=0.8\textwidth]{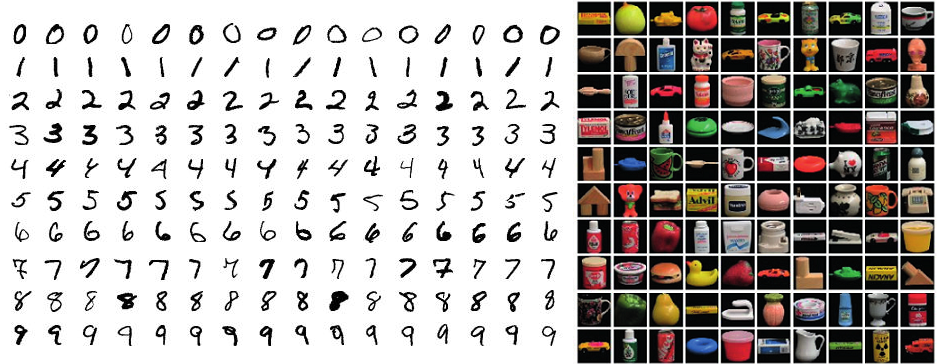}
\caption{Example images from the MNIST data set (left), and the COIL-100 data set (right).}
\label{fig:exampdata}
\end{figure}

{\bf The MNIST data set \cite{li2012the}} consists of 60,000 training images of $28 \times 28$-pixel grayscale hand-written images of the digits $0$ through $9$. Thus, MNIST $10$ labels to correctly classify between, and $N = 28^2 = 784$.  For all experiments involving the MNIST dataset $n / 10$ digits of each type are selected uniformly at random to form the training set $X$, for a total of $n$ vectorized training images in $\mathbbm{R}^{784}$.  Then, $100$ digits of each type are randomly selected from those not used for training in order to form the test set $S$, leading to a total of $n' = 1000$ vectorized test images in $\mathbbm{R}^{784}$.  See the left side of Figure~\ref{fig:exampdata} for example MNIST images.\\

{\bf The COIL-100 data set \cite{nene1996columbia}} is a collection of $128 \times 128$-pixel color images of $100$ objects, each photographed $72$ times where the object has been rotated by $5$ degrees each time to get a complete rotation.  However, only the green color channel of each image is used herein for simplicity.  Thus, herein COIL-100 consists of $7,200$ total vectorized images in $\mathbbm{R}^{N}$ with $N = 128^2 = 16,384$, where each image has one of $100$ different labels (72 images per label).  For all experiments involving this COIL-100 data set, $n / 100$ training images are down sampled from each of the $100$ objects' rotational image sequences.  Thus, the training sets each contain $n / 100$ vectorized images of each object, each photographed at rotations of $\approx 36000 / n$ degrees (rounded to multiples of $5$).  The resulting training data sets therefore all consist of $n$ vectorized images in $\mathbbm{R}^{16,384}$.  After forming each training set, $10$ images of each type are then randomly selected from those not used for training in order to form the test set $S$, leading to a total of $n' = 1000$ vectorized test images in $\mathbbm{R}^{16,384}$ per experiment.  See the right side of Figure~\ref{fig:exampdata} for example COIL-100 images.

\subsection{A Comparison of Four Embedding Strategies via NN Classification}

In this section we seek to better understand $(i)$ when terminal embeddings outperform standard JL-embedding matrices in practice with respect to accurate compressive NN classification, $(ii)$ what type of objective functions $h_{{\bf y},{\bf y}_{\overline{X}}}$ in Line 3 of Algorithm~\ref{alg:FTE} perform best in practice when computing a terminal embedding, and $(iii)$ how much dimensionality reduction one can achieve with a terminal embedding without appreciably degrading standard NN classification results in practice.  To gain insight on these three questions we will compare the following four embedding strategies in the context of NN classification.  These strategies begin with the most trivial linear embeddings (i.e., the identity map) and slowly progress toward extremely non-linear terminal embeddings.
\begin{itemize}
    \item[(a)] {\bf Identity:}  We use the data in its original uncompressed form (i.e., we use the trivial embedding $f: \mathbbm{R}^N \rightarrow \mathbbm{R}^N$ defined by $f({\bf y}) = {\bf y}$ in Algorithm~\ref{Alg:Class}).  Here the embedding dimension is always fixed to be $N$.
    
    \item[(b)] {\bf Linear:}  We compressively embed our training data $X$ using a JL embedding.  More specifically, we generate an $m \times N$ random matrix $\Phi$ with i.i.d. standard Gaussian entries and then set
    $f: \mathbbm{R}^N \rightarrow \mathbbm{R}^{m+1}$ to be $f({\bf y}) := \left(\Phi {\bf y},~0 \right)$ in Algorithm~\ref{Alg:Class} for various choices of $m$.  It is then hoped that $f$ will embed the test data $S$ well in addition to the training data $X$.  Note that this embedding choice for $f$ is consistent with Algorithm~\ref{alg:FTE} where one lets $X = X \cup S$ when evaluating Line 4, thereby rendering the minimization problem in Line 3 irrelevant.
    
    \item [(c)]{\bf A Valid Terminal Embedding That is as Linear as Possible:}  To minimize the pointwise difference between the terminal embedding $f$ computed by Algorithm~\ref{alg:FTE} and the linear map defined above in (b), we may choose the objective function in Line 3 of Algorithm~\ref{alg:FTE} to be $h_{{\bf y},{\bf y}_{\overline{X}}}({\bf z}) := \langle \Phi ({\bf y}_{\overline{X}} - {\bf y}),{\bf z} \rangle$.  To see why solving this minimizes the pointwise difference between $f$ and the linear map in (b), let ${\bf y}'$ be such that $\langle \Phi ({\bf y}_{\overline{X}} - {\bf y}),{\bf z} \rangle$ is minimal subject to the constraints in Line 3 of Algorithm~\ref{alg:FTE} when ${\bf z} = {\bf y}'$.  Since ${\bf y}$ and ${\bf y}_{\overline{X}}$ are fixed here, we note that ${\bf z} = {\bf y}'$ will then also minimize
    \begin{align*}
    ~&~\left\| \Phi ({\bf y}_{\overline{X}} - {\bf y}) \right\|_2^2 + 2\langle \Phi ({\bf y}_{\overline{X}} - {\bf y}),{\bf z} \rangle + \| {\bf y} - {\bf y}_{\overline{X}} \|_2^2\\
    =&~\left\| \Phi ({\bf y}_{\overline{X}} - {\bf y}) \right\|_2^2 + \| {\bf z} \|_2^2+ 2\langle \Pi ({\bf y}_{\overline{X}} - {\bf y}),{\bf z} \rangle + \| {\bf y} - {\bf y}_{\overline{X}} \|_2^2 - \| {\bf z} \|_2^2\\
    =&~\left\| \Phi ({\bf y}_{\overline{X}} - {\bf y}) +{\bf z} \right\|_2^2 + \| {\bf y} - {\bf y}_{\overline{X}} \|_2^2 - \| {\bf z} \|_2^2\\
    =&~\left\| \left(\Phi {\bf y}_{\overline{X}} +{\bf z}, \sqrt{\| {\bf y} - {\bf y}_{\overline{X}} \|_2^2 - \| {\bf z} \|_2^2} \right) - (\Phi {\bf y},0) \right\|_2^2,
    \end{align*}
    subject to the desired constraints.  Hence, we can see that choosing ${\bf z} = {\bf y}'$ as above is equivalent to minimizing $\| f({\bf y}) - (\Phi {\bf y},0) \|_2^2$ over all valid choices of terminal embeddings $f$ that satisfy the existing theory.
    
    \item[(d)]{\bf A Terminal Embedding Computed by Algorithm~\ref{alg:FTE} as Presented:}  This terminal embedding is computed using Algorithm~\ref{alg:FTE} exactly as it is formulated above (i.e., with the objective function in Line 3 chosen to be $h_{{\bf y},{\bf y}_{\overline{X}}}({\bf z}) := \| {\bf z}\|^2_2 + 2 \langle \Phi ({\bf y} - {\bf y}_{\overline{X}}),{\bf z} \rangle$).  Note that this choice of objective function was made to encourage non-linearity in the resulting terminal embedding $f$ computed by Algorithm~\ref{alg:FTE}.  To understand our intuition for making this choice of objective function in order to encourage non-linearity in $f$, suppose that $\| {\bf z}\|^2_2 + 2 \langle \Phi ({\bf y} - {\bf y}_{\overline{X}}),{\bf z} \rangle$ is minimal subject to the constraints in Line 3 of Algorithm~\ref{alg:FTE} when ${\bf z} = {\bf y}'$.  Since ${\bf y}$ and ${\bf y}_{\overline{X}}$ are fixed independently of ${\bf z}$ this means that ${\bf z} = {\bf y}'$ then also minimize 
    \begin{align*}
        \|{\bf z} \|^2_2 + 2 \langle \Phi ({\bf y} - {\bf y}_{\overline{X}}),{\bf z} \rangle + \|\Phi({\bf y} - {\bf y}_{\overline{X}})\|^2_2 &= \| {\bf z} + \Phi({\bf y} - {\bf y}_{\overline{X}}) \|^2_2.
    \end{align*}
    Hence, this objection function is encouraging ${\bf y}'$ to be as close to $-\Phi({\bf y} - {\bf y}_{\overline{X}}) ~=~ \Phi({\bf y}_{\overline{X}} - {\bf y})$ as possible subject to satisfying the constraints in Line 3 of Algorithm~\ref{alg:FTE}.  Recalling (c) just above, we can now see that this is exactly encouraging ${\bf y}'$ to be a value for which the objective function we seek to minimize in (c) is relatively large.
\end{itemize}

We are now prepared to empirically compare the four types of embeddings (a) -- (d) on the data sets discussed above in Section~\ref{sec:DataSets}.  To do so, we run Algorithm~\ref{Alg:Class} four times for several different choices of embedding dimension $m$ on each data set below, varying the choice of embedding $f$ between (a), (b), (c), and (d) for each value of $m$.  The successful classification percentage is then plotted as a function of $m$ for each different data set and choice of embedding.  See Figures~\ref{fig:CompareEmbeddings}(a) and~\ref{fig:CompareEmbeddings}(c) for the results.  In addition, to quantify the extent to which the embedding strategies (b) -- (d) above are increasingly nonlinear, we also measure the relative distance between where each training-set embedding $f$ maps points in the test sets versus where its associated linear training-set embedding would map them.  More specifically, for each embedding $f$ and test point ${\bf y} \in S$ we let
\begin{align*}
\text{Nonlinearity}_{f}({\bf y}) = \dfrac{\|f( {\bf y}) - (\Phi {\bf y},0)\|_{2}}{\|(\Phi {\bf y},0)\|_{2}} \times 100\%
\end{align*}
See Figures~\ref{fig:CompareEmbeddings}(b) and~\ref{fig:CompareEmbeddings}(d) for plots of $$\text {Mean}_{{\bf y} \in S} \text{ Nonlinearity}_{f}({\bf y})$$ for each of the embedding strategies (b) -- (d) on the data sets discussed in Section~\ref{sec:DataSets}.

To compute solutions to the minimization problem in Line 3 of Algorithm \ref{alg:FTE} below we used the MATLAB package CVX \cite{cvx,grant2008graph} with the initialization ${\bf z}_0 = \Phi ( {\bf y} - {\bf y}_{\overline{X}})$ and $\epsilon = 0.1$ in the constraints. All simulations were performed using MATLAB R2021b on an Intel desktop with a 2.60GHz i7-10750H CPU and 16GB DDR4 2933MHz memory. All code used to generate the figures below is publicly available at \url{https://github.com/MarkPhilipRoach/TerminalEmbedding}.

\begin{figure}[H] 
\centering
\includegraphics[width=0.7\textwidth]{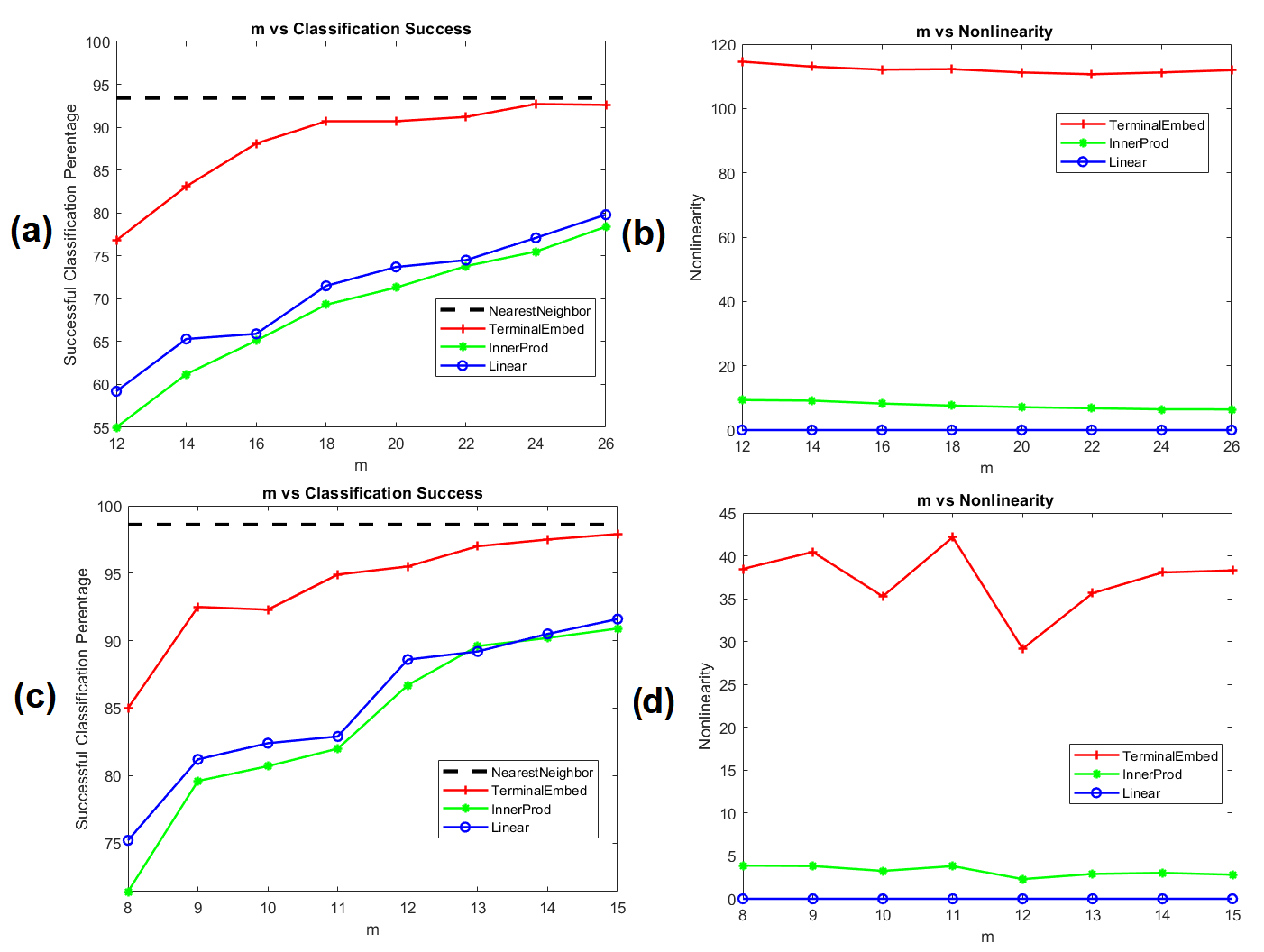}
\caption{Figures \ref{fig:CompareEmbeddings}(a) and \ref{fig:CompareEmbeddings}(b) concern the MNIST data set with training set size $n = 4000$ and test set size $n' = 1000$ in all experiments.  Similarly, Figures \ref{fig:CompareEmbeddings}(c) and \ref{fig:CompareEmbeddings}(d) concern the COIL-100 data set with training set size $n = 3600$ and test set size $n' = 1000$ in all experiments.  In both Figures \ref{fig:CompareEmbeddings}(a) and \ref{fig:CompareEmbeddings}(c) the dashed black ``NearestNeighbor" line plots the classification accuracy when the identity map (a) is used in Algorithm~\ref{Alg:Class}.  Note that the ``NearestNeighbor" line is independent of $m$ because the identity map involves no compression.  Similarly, in all of the Figures \ref{fig:CompareEmbeddings}(a) -- \ref{fig:CompareEmbeddings}(d) the red ``TerminalEmbed" curves correspond to the use of Algorithm~\ref{alg:FTE} as it's presented to compute highly non-linear terminal embeddings  (embedding strategy (d) above), the green ``InnerProd" curves correspond to the use of nearly linear terminal embeddings (embedding strategy (c) above), and the blue ``Linear" curves correspond to the use of linear JL embedding matrices (embedding strategy (b) above).
}
\label{fig:CompareEmbeddings}
\end{figure}

Looking at Figure~\ref{fig:CompareEmbeddings} one can see that the most non-linear embedding strategy (d) -- i.e., Algorithm~\ref{alg:FTE}
-- allows for the best compressed NN classification performance, outperforming standard linear JL embeddings for all choices of $m$.  Perhaps most interestingly, it also quickly converges to the uncompressed NN classification performance, matching it to within $1$ percent at the values of $m = 24$ for MNIST and $m = 15$ for COIL-100.  This corresponds to relative dimensionality reductions of 
$$100(1 - 24/784) \% \approx 96.9 \% $$
and
$$100(1 - 15/16384) \% \approx 99.9 \%,$$
respectively, with negligible loss of NN classification accuracy.  As a result, it does indeed appear as if nonlinear terminal embeddings have the potential to allow for improvements in dimensionality reduction in the context of classification beyond what standard linear JL embeddings can achieve.

Of course, challenges remain in the practical application of such nonlinear terminal embeddings.  Principally, their computation by, e.g., Algorithm~\ref{alg:FTE} is orders of magnitude slower than simply applying a JL embedding matrix to the data one wishes to compressively classify.  Nonetheless, if dimension reduction at all costs is one's goal, terminal embeddings appear capable of providing better results than their linear brethren.  Recent theoretical work \cite{cherapanamjeri2022terminal} aimed at lessening their computational deficiencies looks promising.

\subsection{Additional Experiments on Effective Distortions and Run Times}

In this section we further investigate the best performing terminal embedding strategy from the previous section (i.e., Algorithm~\ref{alg:FTE}) on the MNIST and COIL-100 data sets.  In particular, we provide illustrative experiments concerning the improvement of $(i)$ compressive classification accuracy with training set size, and $(ii)$ the effective distortion of the terminal embedding with embedding dimension $m+1$.  Furthermore, we also investigate $(iii)$ the run time scaling of Algorithm~\ref{alg:FTE}.

To compute the effective distortions of a given (terminal) embedding of training data $X$, $f: \mathbbm{R}^N \rightarrow \mathbbm{R}^{m+1}$, over all available test and train data $X \cup S$ we use 
\begin{align*}
\text{MaxDist}_f = \underset{{\bf x} \in X}{\max} \; \underset{{\bf y} \in S \cup X \setminus \{ {\bf x} \}}{\max} \dfrac{\|f({\bf y}) - f({\bf x})\|_{2}}{\| {\bf y} - {\bf x}\|_{2}}, \quad \text{MinDist}_f = \underset{{\bf x} \in X}{\min} \; \underset{{\bf y} \in S \cup X \setminus \{ {\bf x} \}}{\min} \dfrac{\|f({\bf y}) - f({\bf x})\|_{2}}{\| {\bf y} - {\bf x}\|_{2}}.
\end{align*}
Note that these correspond to estimates of the upper and lower multiplicative distortions, respectively, of a given terminal embedding (see (\ref{eq: terminal embedding})). In order to better understand the effect of the minimizer ${\bf y}'$ of the minimization problem in Line 3 of Algorithm~\ref{alg:FTE} on the final embedding $f$, we will also separately consider the effective distortions of its component linear JL embedding ${\bf y} \mapsto (\Phi {\bf y}, 0)$ below.  See Figures~\ref{fig:MNISTFigure} and~\ref{fig:COILFigure} for such plots using the MNIST and COIL-100 data sets, respectively.

\begin{figure}[H] 
\centering
\includegraphics[width=1\textwidth]{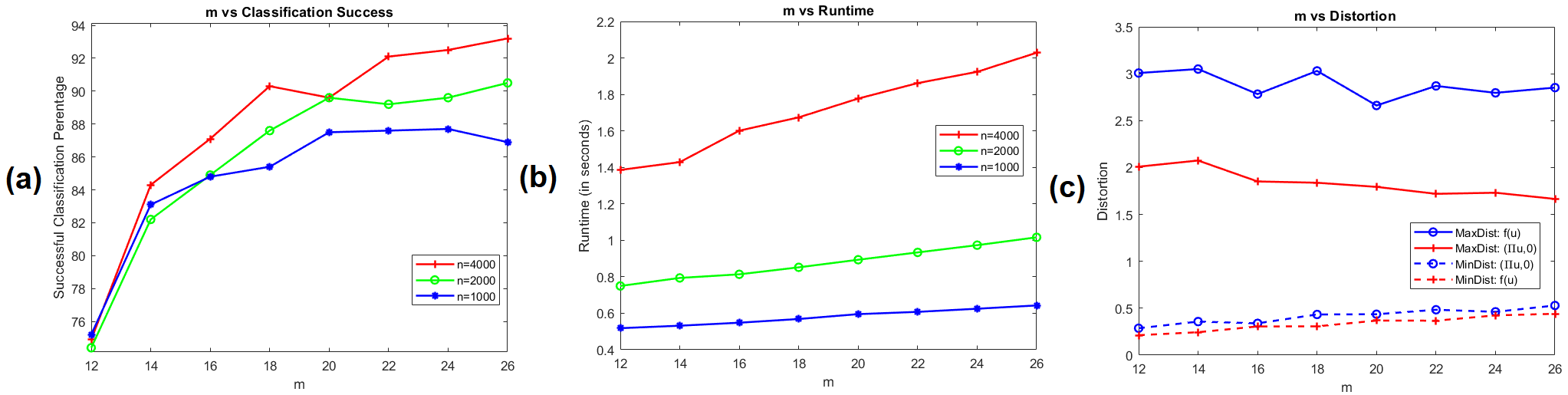}
\caption{This figure compares (a) compressive NN classification accuracies, and (b) the classification run times of Algorithm~\ref{Alg:Class} averaged over all ${\bf y} \in S$, on the MNIST data set.  Three different training data set sizes $n = \lvert X \rvert \in \{ 1000,~ 2000,~ 4000\}$ were fixed as the embedding dimension $m+1$ varied for each of the first two subfigures.  Recall that the test set size is always fixed to $n' = 1000$.  In addition, Figure (c) compares MaxDist$_f$ and MinDist$_f$ for the nonlinear $f$ computed by Algorithm \ref{alg:FTE} versus its component linear embedding ${\bf y} \mapsto (\Phi {\bf y}, 0)$ as $m$ varies for a fixed embedded training set size of $n = 4000$. 
}
\label{fig:MNISTFigure}
\end{figure}

\begin{figure}[H] 
\centering
\includegraphics[width=1\textwidth]{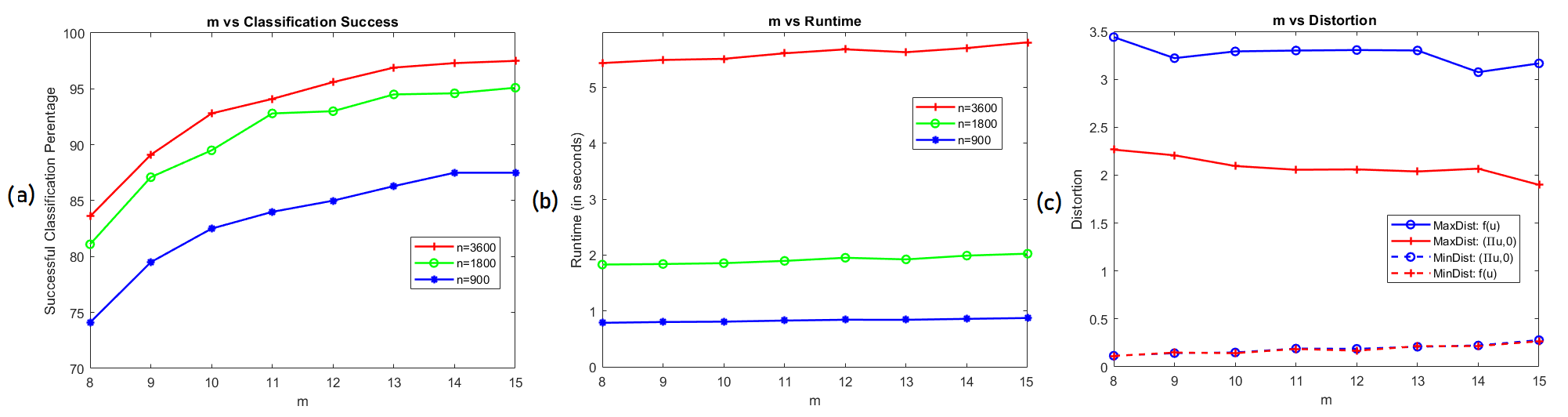}
\caption{Figures (a) and (b) here are run with identical parameters as for their corresponding subfigures in Figure~\ref{fig:MNISTFigure}, except using the COIL-100 data set.  Similarly, Figure (c) compares MaxDist$_f$ and MinDist$_f$ for the nonlinear $f$ computed by Algorithm \ref{alg:FTE} versus its component linear embedding ${\bf y} \mapsto (\Phi {\bf y}, 0)$ as $m$ varies for a fixed embedded training set size of $n = 3600$. 
}
\label{fig:COILFigure}
\end{figure}

Looking at Figures~\ref{fig:MNISTFigure} and~\ref{fig:COILFigure} one notes several consistent trends.  First, compressive classification accuracy increases with both training set size $n$ and embedding dimension $m$, as generally expected.  Second, compressive classification run times also increase with training set size $n$ (as well as more mildly with embedding dimension $m$).  This is mainly due to the increase in the number of constraints in Line 3 of Algorithm~\ref{alg:FTE} with the training set size $n$.  Finally, the distortion plots indicate that the nonlinear terminal embeddings $f$ computed by Algorithm~\ref{alg:FTE} tend to preserve the lower distortions in a similar way as linear JL embeddings do. We obtained different results for upper distortions. Note that our main result Theorem \ref{theo: main} shows that the upper distortions of the nonlinear embeddings considered converge to 1 for sufficiently large dimensions $m$. However, for smaller dimensions the upper distortions of the nonlinear embeddings $f$ appear to significantly exceed that of the linear one. In fact, for small dimensions we obtain an upper distortion around 3, which falls outside the scope of our framework (1 + $\varepsilon$) with $\varepsilon \in (0,1)$. As a result, the nonlinear terminal embeddings considered here appear to spread the initially JL embedded data out, perhaps pushing different classes away from one another in the process.  If so, it would help explain the increased compressive NN classification accuracy observed for Algorithm~\ref{alg:FTE} in Figure~\ref{fig:CompareEmbeddings}.

\section*{Acknowledgements} Mark Iwen was supported in part by NSF DMS 2106472.  Mark Philip Roach was supported in part by NSF DMS 1912706.  Mark Iwen would like to dedicate his effort on this paper to William E. Iwen, 1/27/1939 -- 12/10/2021, as well as to all those at the VA Medical Center in St. Cloud, MN who cared for him so tirelessly during the COVID-19 pandemic.\footnote{\url{https://www.legacy.com/us/obituaries/name/william-iwen-obituary?id=33790586}}  Thank you.

\bibliographystyle{plain}
\bibliography{bibliography}

\end{document}